\numberwithin{equation}{section}
\newtheorem{thm}{Theorem}[section]
\newtheorem{pr}[thm]{Proposition}
\newtheorem{lm}[thm]{Lemma}
\newtheorem{re}[thm]{Remark}
\newtheorem{ex}[thm]{Example}
\newtheorem{cor}[thm]{Corollary}
\newtheorem{pro}[thm]{Problem}
\newtheoremstyle{case}{}{}{}{}{}{:}{ }{}
\theoremstyle{case}
\newtheoremstyle{caso}{}{}{}{}{}{:}{ }{}
\theoremstyle{caso}
\DeclareRobustCommand{\svdots}{% s for `scaling'
  \vbox{%
    \baselineskip=0.33333\normalbaselineskip
    \lineskiplimit=0pt
    \hbox{.}\hbox{.}\hbox{.}%
    \kern-0.2\baselineskip
  }%
}
\newcommand{\lcm}{\text{lcm}}
\newcommand{\stirlingone}[2]{\genfrac{[}{]}{0pt}{}{#1}{#2}}
\newcommand{\stirlingtwo}[2]{\genfrac{\lbrace}{\rbrace}{0pt}{}{#1}{#2}}
\theoremstyle{remark}
\let\@@pmod\pmod
\DeclareRobustCommand{\pmod}{\@ifstar\@pmods\@@pmod}
\def\@pmods#1{\mkern4mu({\operator@font mod}\mkern 6mu#1)}
\title{The Turán and Laguerre inequalities for quasi-polynomial-like functions} %and restricted partition functions}
\author{Krystian Gajdzica}
\address{Institute of Mathematics \\
	Faculty of Mathematics and Computer Science \\
	Jagiellonian University in Cracow
}
\email{krystian.gajdzica@doctoral.uj.edu.pl}
\keywords{integer partition, $A$-partition function, quasi-polynomial, log-concavity, higher order Tur\'an inequalities, Laguerre inequalities.}
\subjclass[2020]{Primary 11P82, 11P84; Secondary 05A17.}
\begin{document}

\setlength{\parindent}{10mm}
\maketitle

\begin{abstract}
This paper deals with both the higher order Tur\'an inequalities and the Laguerre inequalities for quasi-polynomial-like functions --- that are expressions of the form $f(n)=c_l(n)n^l+\cdots+c_d(n)n^d+o(n^d)$, where $d,l\in\mathbb{N}$ and $d\leqslant l$. A natural example of such a function is the $A$-partition function $p_{A}(n)$, which enumerates the number of partitions of $n$ with parts in the fixed finite multiset $A=\{a_1,a_2,\ldots,a_k\}$ of positive integers. For an arbitrary positive integer $d$, we present efficient criteria for both the order $d$ Tur\'an inequality and the $d$th Laguarre inequality for quasi-polynomial-like functions. In particular, we apply these results to deduce non-trivial analogues for $p_A(n)$.

%in particular, we deal with the
%For a non-decreasing sequence of positive integers $\mathcal{A}=\left(a_i\right)_{i=1}^\infty$ and a positive integer $k$, the restricted partition function $p_\mathcal{A}(n,k)$ counts the number of partitions of  $n$ with parts in the multiset $\{a_1,a_2,\ldots,a_k\}$. Recently the author resolved some problems related to the log-behavior of $p_\mathcal{A}(n,k)$ like, for instance, the log-concavity or the $r$-log-concavity. We extend these results and discover an efficient criterion for the so-called higher order Tur\'an inequalities for $p_\mathcal{A}(n,k)$. More precisely, for any integer $d\geqslant1$ we find out appropriate requirements on the multiset $\{a_1,a_2,\ldots,a_k\}$ and the number $k$ which imply the order $d$ Tur\'an inequality for all but finitely many values $n$.
\end{abstract}

\section{Introduction}

A partition of a non-negative integer $n$ is a weakly-decreasing sequence of positive integers $\lambda=(\lambda_1,\lambda_2,\ldots,\lambda_j)$ such that 
$$n=\lambda_1+\lambda_2+\cdots+\lambda_j.$$ 
The numbers $\lambda_i$ are called parts of the partition $\lambda$. The partition function $p(n)$ enumerates all partitions of $n$. For instance, there are $5$ partitions of $4$, namely, $(4)$, $(3,1)$, $(2,2)$, $(2,1,1)$ and $(1,1,1,1)$ --- in other words $p(4)=5$. We do not know any easy formula for $p(n)$. However, Euler proved that its generating function takes the form
\begin{align*}
\sum_{n=0}^\infty p(n)x^n=\prod_{i=1}^\infty\frac{1}{1-x^i}.
\end{align*}

The partition theory plays a crucial rule in many parts of mathematics and other sciences. In statistical mechanics, the well-known Rogers–Ramanujan identities are related to the solution of the hard hexagon model, see \cite{GA3, BA}. Further, partitions have applications in molecular chemistry, crystallography and quantum mechanics, as a consequence of the fact that all irreducible representations of the permutation group $S_n$ and the unitary group $U(n)$ might be labelled by them. It is also worth noting that partitions appear in genetics in the so-called Ewens's sampling formula, see \cite{Ewens, Kingman}. There is a plethora of works devoted to the theory of partitions. For a general introduction to the topic, we encourage the reader to see Andrews' books \cite{GA2, GA1} as well as \cite{AK, H, Sills}.

Now, let us assume that $A=\{a_1,a_2,\ldots,a_k\}$ is a finite multiset of positive integers. By an $A$-partition of a non-negative integer $n$, we mean any partition $\lambda=(\lambda_1,\lambda_2,\ldots,\lambda_j)$ of $n$ with parts in $A$. 
For the sake of clarity, we additionally assume that two $A$-partitions are considered the same if there is only a difference in the order of their parts. The $A$-partition function $p_A(n)$ enumerates all $A$-partitions of $n$. In particular, we have that $p_A(n)=0$ whenever $n$ is a negative integer and $p_A(0)=1$ with $\lambda=()$. The generating function for $p_A(n)$ is given by
\begin{equation}\label{GF}
    \sum_{n=0}^\infty p_A(n)x^n=\prod_{a\in A}\frac{1}{1-x^{a}}.
\end{equation}
For example, if $A=\{1,2,\textcolor{blue}{2},3,\textcolor{blue}{3},\textcolor{red}{3},4,\textcolor{blue}{4}\} $, then we have that $p_A(4)=11$, namely: $(\textcolor{blue}{4})$, $(4)$, $(\textcolor{red}{3},1)$, $(\textcolor{blue}{3},1)$, $(3,1)$, $(\textcolor{blue}{2},\textcolor{blue}{2})$, $(\textcolor{blue}{2},2)$, $(2,2)$, $(\textcolor{blue}{2},1,1)$, $(2,1,1)$ and $(1,1,1,1)$.

%Now, let us fix a positive integer $k$, and assume that $\mathcal{A}=\left(a_i\right)_{i=1}^\infty$ is a weakly-increasing sequence of positive integers. By a restricted partition $\lambda$ of $n$, we mean each partition of $n$ with parts in the multiset $\{a_1,a_2,\ldots,a_k\}$. For the sake of brevity, we assign unique colors to those parts of $\mathcal{A}$ whose values are the same in order to distinguish them. Furthermore, two restricted partitions are considered the same if there is only a difference in the order of their parts. The restricted partition function $p_\mathcal{A}(n,k)$ counts all restricted partitions of $n$. It is natural to extend the definition of $p_\mathcal{A}(n,k)$ for all integers $n$ just by setting $p_\mathcal{A}(n,k)=0$ if $n<0$. Also note that $p_\mathcal{A}(0,k)=1$ because of the empty partition. One can also check that the generating function for $p_\mathcal{A}(n,k)$ is given by
%\begin{equation}\label{GF}
%    \sum_{n=0}^\infty p_\mathcal{A}(n,k)x^n=\prod_{i=1}^k\frac{1}{1-x^{a_{i}}}.
%\end{equation}
%For example, if $\mathcal{A}=(1,2,\textcolor{blue}{2},3,\textcolor{blue}{3},\textcolor{red}{3},4,\textcolor{blue}{4},\textcolor{red}{4},\ldots)$ is a sequence of consecutive positive integers such that each number $j$ occurs in $j$ different colors, then we have that $p_\mathcal{A}(4,5)=8$, namely: $(\textcolor{blue}{3},1)$, $(3,1)$, $(\textcolor{blue}{2},\textcolor{blue}{2})$, $(\textcolor{blue}{2},2)$, $(2,2)$, $(\textcolor{blue}{2},1,1)$, $(2,1,1)$ and $(1,1,1,1)$.

There is an abundance of literature devoted to $A$-partition function when $\#A<\infty$. We refer the reader to, for instance, \cite{GA, Bell, CN, DV, KG1, MBN, RS, MU}.

It turns out that $p_A(n)$ is a quasi-polynomial whenever $A$ is a finite set or a multiset of positive integers. More precisely, if $\#A=k$, then the $A$-partition function is an expression of the form 
\begin{align}\label{def: quasi p_A}
    p_A(n)=b_{k-1}(n)n^{k-1}+b_{k-2}(n)n^{k-2}+\cdots+b_0(n),
\end{align}
where the coefficients $b_0(n),b_1(n),\ldots,b_{k-1}(n)$ depend on the residue class of \linebreak $n\pmod*{\lcm{A}}$. The first proof of the above fact is probably due to Bell \cite{Bell}. We encourage the reader to see Stanley's book \cite[Section~4.4]{Stanley} for more information about quasi-polynomials. On the other hand, a quasi-polynomial-like function $f(n)$ is a function which asymptotically behaves like a quasi-polynomial. More specifically, $f(n)$ can be written as
\begin{align}\label{def: quasi-polynomial-like function}
    f(n)=c_{l}(n)n^{l}+c_{l-1}(n)n^{l-1}+\cdots+c_r(n)n^r+o(n^r),
\end{align}
where $r,l\in\mathbb{N}$, $l\geqslant r$, the coefficients $c_r(n),c_{r+1}(n),\ldots,c_{l}(n)$ depend on the residue class of $n\pmod*{M}$ for some positive integer $M\geqslant2$. In particular, we see that $p_A(n)$ is a quasi-polynomial-like function.

%It is worth noting that if we allow `$k=\infty$', then $p(n)=p_{\mathcal{A}_1}(n,\infty)$, where $\mathcal{A}_1=\left(i\right)_{i=1}^\infty$.
This paper deals with two problems. The first of them concerns the so-called higher order Tur\'an inequalities for quasi-polynomial-like functions. Let us recall that a sequence $\left(\omega_i\right)_{i=0}^\infty$ of real numbers satisfies the second order Tur\'an inequality if we have that
$$\omega_n^2\geqslant \omega_{n+1}\omega_{n-1}$$
for all $n\geqslant1$. Further, it fulfills the third order Tur\'an inequality if the following 
\begin{align*}
    4(\omega_n^2-\omega_{n-1}\omega_{n+1})(\omega_{n+1}^2-\omega_n\omega_{n+2})\geqslant(\omega_{n}\omega_{n+1}-\omega_{n-1}\omega_{n+2})^2
\end{align*}
is true for every $n\geqslant1$. More generally, if $J_\omega^{d,n}(x)$ are the Jensen polynomials of degree $d$ and shift $n$ associated to the sequence $\omega:=(\omega_i)_{i=0}^\infty$, defined by
\begin{align*}
    J_\omega^{d,n}(x):=\sum_{i=0}^d\binom{d}{i}\omega_{n+i}x^i,
\end{align*}
then it is known that $(\omega_i)_{i=0}^\infty$ satisfies the order $d$ Tur\'an inequality at $n$ if and only if $J_\omega^{d,n}(x)$ is hyperbolic, i.e. all of its roots are real numbers (see, \cite{Craven, Csordas, Csordas1, Griffin}). 

In 2015 DeSalvo and Pak \cite{DSP} reproved the result (obtained independently by Nicolas \cite{N} in the `70s) that the partition function $p(n)$ satisfies the second order Tur\'an inequality for all $n>25$. Afterwards, Chen \cite{Chen1} conjectured that the third order Tur\'an inequality for $p(n)$ is valid for all $n\geqslant94$. The problem was solved by Chen, Jia and Wang \cite{Chen2} and motivated them to state another conjecture that for each $d\geqslant1$ there is some integer $N_p(d)$ such that the associated Jensen polynomial $J_{p}^{d,n}(X)$ is hyperbolic for all $n\geqslant N_p(d)$. That conjecture, on the other hand, was established by Griffin et al. \cite{Griffin}. It is worth pointing out that Larson and Wagner \cite{Larson} discovered efficient upper bound for the value of $N_p(d)$ for any $d$. 

The aforementioned results have initiated vast research related to discovering similar properties for other variations of the partition function.  Iskander et al. \cite{Iskander} proved that for every $d\geqslant2$ the fractional partition function $p_\alpha(n)$, which is defined for $\alpha\in\mathbb{Q}$ in terms of the following generating function
\begin{align*}
    \sum_{n=0}^\infty p_\alpha(n)x^n:=\prod_{i=1}^\infty\frac{1}{(1-x^i)^\alpha}
\end{align*}
(for more information, see \cite{Chan}), satisfies the order $d$ Tur\'an inequality for all but finitely many values of $n$. Further, Craig and Pun \cite{Craig} investigated the so-called $k$-regular partition function $p_{k}(n)$ (i.e. $p_{k}(n)$ enumerates only those partitions of $n$ whose parts are not divisible by $k$) in that context. They obtained that for every $k\geqslant2$ and $d\geqslant1$ the associated Jensen polynomial $J_{p_{k}}^{d,n}(X)$ is hyperbolic for all sufficiently large numbers $n$. Heim, Neuhauser and Tr\"{o}ger \cite{BNT2} investigated the plane partition function $PL(n)$ (see Andrews \cite[Chapter~11]{GA2} or \cite[Chapter~10]{GA1}) and its polynomization in this direction. They conjectured that for any $d\geqslant1$ the plane partition function fulfills the order $d$ Tur\'an inequality for all large enough numbers $n$. That conjecture was solved by Ono, Pujahari and Rolen in \cite{Ono} with explicit bounds provided by Ono’s PhD student Pandey \cite{Pandey}. Further, Baker and Males \cite{Baker} showed that the number $\overline{p}_j(n)$ of partitions with BG-rank $j$, and the number $\overline{p}_j(a,b;n)$ of partitions with BG-rank $j$ and $2$-quotient rank congruent to $a\pmod*{b}$ satisfy (asymptotically) all higher order Tur\'an inequalities for even values of $j$ and $n$. We refer the reader to Berkovich and Garvan's paper \cite{Berkovich} for additional information about $\overline{p}_j(n)$ and $\overline{p}_j(a,b;n)$. Finally, Dong, Ji and Jia \cite{Dong} discovered that the Jensen polynomial corresponding to $d\geqslant1$ and the Andrews and Paule’s broken $k$-diamond partition function $\Delta_k(n)$, namely $J_{\Delta_k}^{d,n}(X)$, is hyperbolic for $k=1$ or $2$ and all but finitely many positive integers $n$. The explicit definition of broken $k$-diamond partitions (for any $k\geqslant1$) together with some properties of $\Delta_k(n)$ might be found in Andrews and Paule's paper \cite{GA4}. The above-mentioned results have been our motivation to study the higher order Tur\'an inequalities for both quasi-polynomial-like functions in general and $A$-partition functions in particular.

The second issue which this paper deals with concerns the so-called Laguerre inequalities for quasi-polynomial-like functions. Once again, let us assume that $\omega=\left(\omega_i\right)_{i=0}^\infty$ is a sequence of real numbers. For a fixed non-negative integer $d$, we say that $\omega$ satisfies the Laguerre inequality of order $d$ at $n$ if
\begin{align}\label{def: Discrete Laguerre}
    \sum_{j=0}^{2d}(-1)^{j+d}\binom{2d}{j}\omega_{n+j}\omega_{n+2d-j}\geqslant0.
\end{align}
The discrete Laguerre inequalities (\ref{def: Discrete Laguerre}) were firstly introduced by Wang and Yang \cite{Wang-Yang}. It is also worth noting that Wagner \cite[Theorem 1.4]{Wagner2} defined them equivalently by dividing (\ref{def: Discrete Laguerre}) by $(2d)!$. For $d=1$, one can easy observe that (\ref{def: Discrete Laguerre}) reduces to the second order Tur\'an inequality. If $d=2$, then (after simplification) we get 
\begin{align*}
    3 \omega_{n+2}^2-4 \omega_{n+1} \omega_{n+3}+\omega_n \omega_{n+4}\geqslant0.
\end{align*}
Further, the order $3$ Laguerre inequality might be written equivalently as follows:
\begin{align*}
    10 \omega_{n+3}^2-15 \omega_{n+2} \omega_{n+4}+6 \omega_{n+1} \omega_{n+5}-\omega_n \omega_{n+6}\geqslant0,
\end{align*}
and so on.

Wang and Yang \cite{Wang-Yang, Wang-Yang2} investigated Lagurre inequalities for many combinatorial sequences. In particular, they showed that the partition function, the overpartition function, the Motzkin numbers, the Fine numbers, the Domb numbers and the distinct partition function satisfy the order $2$ Laguerre inequality. More recently, Yang \cite{Yang3} also proved that the broken $k$-diamond partition function fulfills the second order Laguerre inequality. On the other hand, Wagner \cite{Wagner2} showed that the partition function satisfies the inequality (\ref{def: Discrete Laguerre}) for every non-negative integer $d$ and all sufficiently large values of $n$. The aforementioned results have motivated us to investigate the issue in the case of quasi-polynomial-like functions.

At the end of Introduction, it needs to be pointed out that studying both the higher order Tur\'an inequalities and the Laguerre inequalities is not only art for art's sake. Let us recall that a real entire (i.e. analytic at all points of the complex plane $\mathbb{C}$) function
\begin{align*}
    f(x)=\sum_{n=0}^\infty a_n\frac{x^n}{n!}
\end{align*}
is in the $\mathcal{LP}$ (Laguerre-P\'olya) class if it may be written as
\begin{align*}
    f(x)=cx^ke^{-ax^2+bx}\prod_{n=1}^\infty\left(1+\frac{x}{x_n}\right)e^{-\frac{x}{x_n}},
\end{align*}
where $a,b,c,x_1,x_2,\ldots$ are all real numbers with $a\geqslant0$, $k$ is a non-negative integer and $\sum_{n=1}^\infty x_n^{-2}<\infty$. For the background of the theory of the $\mathcal{LP}$ functions, we encourage the reader to see \cite{Levin, Rahman}. It turns out that the Riemann hypothesis is equivalent to the statement that the Riemann $\Xi$-function 
\begin{align*}
    \Xi(z):=\frac{1}{2}\left(-z^2-\frac{1}{4}\right)\pi^{\frac{iz}{2}-\frac{1}{4}}\Gamma\left(-\frac{iz}{2}+\frac{1}{4}\right)\zeta\left(-iz+\frac{1}{2}\right)
\end{align*}
is in the $\mathcal{LP}$ class, where $\Gamma$ is the gamma function, and $\zeta$ denotes the Riemann zeta function. There is a necessary condition for the Riemann $\Xi$-function to be in the Laguerre–P\'olya class which states that the Maclaurin coefficients of the $\Xi$-function have to fulfill the order $d$ Tur\'an inequality as well as the Laguerre inequality of order $d$ for every positive integer $d$. For additional information, we refer the reader to \cite{Dimitrov, Patrick, Szego}.

This manuscript is organized as follows. Section 2 delivers necessary concepts, notations and properties which are used throughout the paper. Section 3 studies the higher order Tur\'an inequalities for both quasi-polynomial-like functions and $A$-partition functions. In Section 4, on the other hand, we deal with the Laguerre inequalities. Finally, Section 5 contains some concluding remarks and open problems.

\section{Preliminaries}

At first, we fix some notation. The set of non-negative integers is denoted by $\mathbb{N}$. Further, we put $\mathbb{N}_+:=\mathbb{N}\setminus\{0\}$ and $\mathbb{N}_{\geqslant k}:=\mathbb{N}\setminus\{0,1,\ldots,k-1\}$. 

For a finite multiset $A=\{a_1,a_2,\ldots,a_k\}$ of positive integers, we associate the $A$-partition function $p_A(n)$, which was defined in Introduction. Due to Bell's theorem \cite{Bell}, we know that $p_A(n)$ is a quasi-polynomial given by the equality $(\ref{def: quasi p_A})$, where the coefficients $b_0(n),b_1(n),\ldots,b_{k-1}(n)$ depend on the residue class of $n\pmod*{\lcm{A}}$. It turns out that under some additional assumptions on $A$, we may determine some of the coefficients $b_i(n)$. That is a result obtained by several authors, among others, Almkvist \cite{GA}, Beck et al. \cite{Beck} or Israilov \cite{Israilov}. We present the theorem due to Almkvist \cite{GA}. In order to do that, let us define symmetric polynomials $\sigma_i(x_1,x_2,\ldots,x_k)$ in terms of the power series expansion
\begin{align*}
\sum_{m=0}^\infty\sigma_m(x_1,x_2,\ldots,x_k)t^m:=\prod_{i=1}^k\frac{x_it/2}{\sinh(x_it/2)}.
\end{align*} 
Now, we have the following.
\begin{thm}[Almkvist]\label{2.4}
Let $A=\{a_1,a_2,\ldots,a_k\}$ be fixed and put $s_1:=a_1+a_2+\cdots+a_k$. For a given integer $1\leqslant j\leqslant k$, if $\gcd B=1$ for every $j$-element multisubset ($j$-multisubset) $B$ of $A$, then
\begin{equation*}
p_A(n)=\frac{1}{\prod_{i=1}^ka_i}\sum_{i=0}^{k-j}\sigma_i(a_1,a_2,\ldots,a_k)\frac{(n+s_1/2)^{k-1-i}}{(k-1-i)!}+O(n^{j-2}).
\end{equation*}
\end{thm}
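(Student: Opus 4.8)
The plan is to read off $p_A(n)=[x^n]F(x)$, where $F(x):=\prod_{i=1}^k(1-x^{a_i})^{-1}$, from the partial-fraction (equivalently, residue) expansion of the rational function $F$. Since $F$ is a proper rational function whose poles are exactly the roots of unity, writing $p_A(n)=\frac{1}{2\pi i}\oint_{|x|=\varepsilon}F(x)x^{-n-1}\,dx$ and deforming the contour to a large circle, on which the integral tends to $0$, gives
\[p_A(n)=-\sum_{\rho}\operatorname{Res}_{x=\rho}\!\big(F(x)x^{-n-1}\big),\]
the sum running over all poles $\rho\ne 0$. The pole of $F$ at a primitive $d$-th root of unity has order $\nu_d:=\#\{1\le i\le k:d\mid a_i\}$; in particular $\nu_1=k$, so $x=1$ is the unique pole of maximal order, and it will furnish the main term.

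The first real step is to translate the arithmetic hypothesis. I would observe that ``$\gcd B=1$ for every $j$-multisubset $B$ of $A$'' holds if and only if no integer $d\ge 2$ divides $j$ or more of the $a_i$, i.e. $\nu_d\le j-1$ for all $d\ge 2$: if some $d\ge 2$ divided $j$ of the parts, those $j$ parts would form a $j$-multisubset with $\gcd\ge d$. Consequently, for each primitive $d$-th root of unity $\rho\ne1$, computing $\operatorname{Res}_{x=\rho}(F(x)x^{-n-1})$ by the usual higher-order-pole formula and Leibniz's rule exhibits it as $\rho^{-n}$ times a polynomial in $n$ of degree at most $\nu_d-1\le j-2$; since there are only finitely many such $\rho$ and $|\rho|=1$, their total contribution is $O(n^{j-2})$.

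It then remains to evaluate the residue at $x=1$ exactly, and this is where the generating series of the $\sigma_m$ enters. Substituting $x=e^{-t}$ and using $\dfrac{1}{1-e^{-a_it}}=\dfrac{1}{a_it}\,e^{a_it/2}\,\dfrac{a_it/2}{\sinh(a_it/2)}$, one gets near $t=0$
\[F(e^{-t})=\frac{1}{\prod_{i=1}^k a_i}\cdot\frac{e^{s_1t/2}}{t^{k}}\sum_{m=0}^\infty\sigma_m(a_1,\dots,a_k)\,t^m,\]
by the very definition of $\sigma_m$. Because $x^{-n-1}\,dx$ pulls back to $-e^{nt}\,dt$, the contribution of the pole at $x=1$ to $p_A(n)$ equals $\operatorname{Res}_{t=0}\!\big(F(e^{-t})e^{nt}\big)$ (the two sign changes cancelling), which is the coefficient of $t^{k-1}$ in $e^{(n+s_1/2)t}\sum_m\sigma_m t^m$ divided by $\prod_i a_i$, namely
\[\frac{1}{\prod_{i=1}^k a_i}\sum_{i=0}^{k-1}\sigma_i(a_1,\dots,a_k)\,\frac{(n+s_1/2)^{k-1-i}}{(k-1-i)!}.\]
Finally, the terms with $i>k-j$ are polynomials in $n$ of degree $k-1-i<j-1$ and hence already $O(n^{j-2})$, so they may be merged into the error term, which yields precisely the asserted formula.

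I expect the main obstacles to be essentially bookkeeping: first, making the equivalence between the $\gcd$ hypothesis and the bound $\nu_d\le j-1$ airtight and checking that this bound is exactly what forces the non-principal poles to contribute only $O(n^{j-2})$; and second, the careful sign and orientation accounting in the change of variables $x=e^{-t}$ and in the coefficient extraction. The rest is a routine residue computation; one could alternatively carry it out by writing the principal part of $F$ at $x=1$ as $\sum_{r=1}^k C_r(1-x)^{-r}$ and matching, but the substitution $x=e^{-t}$ is what makes the appearance of $(n+s_1/2)$ and of the symmetric polynomials $\sigma_i$ transparent.
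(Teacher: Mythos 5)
The paper does not prove Theorem \ref{2.4} at all: it is quoted from Almkvist's paper \cite{GA} (and the surrounding references to Beck et al.\ and Israilov), so there is no internal proof to compare against. Your argument is correct and is essentially the standard one used by Almkvist: partial fractions/residues for the rational generating function, the observation that the hypothesis on $j$-multisubsets is exactly the statement that every pole at a root of unity $\rho\neq 1$ has order at most $j-1$ and hence contributes $O(n^{j-2})$, and the substitution $x=e^{-t}$ at the principal pole $x=1$, which produces the factor $e^{s_1t/2}\prod_i\frac{a_it/2}{\sinh(a_it/2)}$ and hence the stated main term; all the sign, orientation and truncation bookkeeping you flag goes through as you describe.
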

One can check that $\sigma_i=0$ if $i$ is odd. Furthermore, if we set $s_m:=a_1^m+a_2^m+\cdots+a_k^m$, then
\begin{align*}
\sigma_0=1\text{,}\hspace{0.2cm}\sigma_2=-\frac{s_2}{24}\text{,}\hspace{0.2cm}\sigma_4=\frac{5s_2^2+2s_4}{5760}\text{,}\hspace{0.2cm}\sigma_6=-\frac{35s_2^3+42s_2s_4+16s_6}{2903040}.
\end{align*}
Essentially, Theorem \ref{2.4} maintains that if $\gcd B=1$ for every $(k-j)$-multisubset $B$ of $A$, then the coefficients $b_{k-1}(n),b_{k-2}(n),\ldots,b_{k-1-j}(n)$ in the equality (\ref{def: quasi p_A}) are independent of the residue class of $n\pmod*{\lcm{A}}$, i.e. they are constants and can be explicitly calculated. Moreover, it is noteworthy that the $A$-partition function is a non-trivial example of a quasi-polynomial-like function --- that is an expression of the form $(\ref{def: quasi-polynomial-like function})$.

Now, let us recall some terminology related to higher order Tur\'an inequalities. Instead of repeating the discussion from Introduction, we directly explain how the order $d$ Tur\'an inequality arises from the hyperbolicity of the Jensen polynomial $J_\omega^{d,n}(x)$ has to be hyperbolic. Let
\begin{align*}
    g(x)=c_sx^s+c_{s-1}x^{s-1}+c_{s-2}x^{s-2}+\cdots+c_{0}
\end{align*}
be a fixed polynomial with real coefficients and denote all its complex roots by $\alpha_1,\alpha_2,\ldots,\alpha_{s}$. By $P_m$, we mean the $m$-th Newton's sum of $g(x)$, which is given by
\begin{align*}
    P_m=\begin{cases}
  s,  & \text{if } m=0, \\
  \alpha_1^m+\alpha_2^m+\cdots+\alpha_s^m,  & \text{if } m=1,2,3,4,\ldots.
\end{cases}
\end{align*}
Further, for the sums $P_0,\ldots,P_{2s-2}$, we associate the Hankel matrix $H(g)$, namely
\begin{align*}
    H(g):=\begin{bmatrix}
P_0 & P_1 & P_2 & \cdots & P_{s-1}\\
P_1 & P_2 & P_3 & \cdots & P_{s}\\
P_2 & P_3 & P_4 & \cdots & P_{s+1}\\
\vdots & \vdots & \vdots & \vdots & \vdots\\
P_{s-2} & P_{s-1} & P_{s} &  \cdots & P_{2s-3}\\
P_{s-1} & P_{s} & P_{s+1} & \cdots & P_{2s-2}
\end{bmatrix}.
\end{align*}
The classical Hermit's theorem \cite{Hermit} states that $g(x)$ is hyperbolic if and only if the matrix $H(g)$ is positive semi-definite. Since each of the Newton's sums might be expressed in terms of the coefficients $c_s,c_{s-1},\ldots,c_0$, Hermit's result provides a set of inequalities on them by

\begin{align*}
    \det\begin{bmatrix}
        P_0
    \end{bmatrix}\geqslant0,\det\begin{bmatrix}
        P_0 & P_1\\
        P_1 & P_2
    \end{bmatrix}\geqslant0,\ldots,\det\begin{bmatrix}
       P_0 & P_1 & P_2 & \cdots & P_{s-1}\\
P_1 & P_2 & P_3 & \cdots & P_{s}\\
P_2 & P_3 & P_4 & \cdots & P_{s+1}\\
\vdots & \vdots & \vdots & \vdots & \vdots\\
P_{s-2} & P_{s-1} & P_{s} &  \cdots & P_{2s-3}\\
P_{s-1} & P_{s} & P_{s+1} & \cdots & P_{2s-2}
    \end{bmatrix}\geqslant0.
\end{align*}

Now, if we assign the Jensen polynomial $J_\omega^{d,n}(x)$ for an arbitrary sequence $\omega=(w_i)_{i=1}^\infty$, then the corresponding inequality for the determinant of the main minor $l\times l$ of $H(J_\omega^{d,n})$:
\begin{align*}
    \det\begin{bmatrix}
       P_0 & P_1 & P_2 & \cdots & P_{l-1}\\
P_1 & P_2 & P_3 & \cdots & P_{l}\\
P_2 & P_3 & P_4 & \cdots & P_{l+1}\\
\vdots & \vdots & \vdots & \vdots & \vdots\\
P_{l-2} & P_{l-1} & P_{l} &  \cdots & P_{2l-3}\\
P_{l-1} & P_{l} & P_{l+1} & \cdots & P_{2l-2}
    \end{bmatrix}\geqslant0
\end{align*}
is called the order $l$ Tur\'an inequality for the sequence $\omega$. In particular, it means that $J_\omega^{d,n}(x)$ is hyperbolic if and only if the sequence $\omega_n=(w_{n+j})_{j=1}^\infty$ satisfies the order $l$ Tur\'an inequality for every $l\in\{1,2,\ldots,d\}$.

From the above discussion, we see that investigating the higher order Tur\'an inequalities does not seem to be an easy challenge. However, there is a paper due to Griffin, Ono, Rolen and Zagier \cite{Griffin}, which delivers an efficient criterion to deal with that issue.

\begin{thm}[Griffin, Ono, Rolen, Zagier]\label{theorem: Griffin-Ono-Rolen-Zagier}
    Let $(\omega_n)_{n=0}^\infty$ be a sequence of real numbers. Suppose further that $(E(n))_{n=0}^\infty$ and $(\delta(n))_{n=0}^\infty$ are sequences of positive real numbers with $\lim_{n\to\infty}\delta(n)=0$, and that $F(t)=\sum_{i=0}^\infty c_it^i$ is a formal power series with complex coefficients. For a fixed $d\geqslant1$, suppose that there are sequences $\left(C_0(n)\right)_{n=0}^\infty,\left(C_1(n)\right)_{n=0}^\infty,\ldots,\left(C_d(n)\right)_{n=0}^\infty$ of real numbers, with $\lim_{n\to\infty} C_i(n)=c_i$ for $0\leqslant i \leqslant d$, such that for $0\leqslant j \leqslant d$, we have
    \begin{align*}
        \frac{\omega_{n+j}}{\omega_n}E(n)^{-j}=\sum_{i=0}^dC_i(n)\delta(n)^ij^i+o\left(\delta(n)^d\right)\hspace{1cm}\text{as } n\to\infty.
    \end{align*}
    Then, we have
    \begin{align*}
        \lim_{n\to\infty}\left(\frac{\delta(n)^{-d}}{\omega_n}J_\omega^{d,n}\left(\frac{\delta(n)x-1}{E(n)}\right)\right)=H_{F,d}(x),
    \end{align*}
    uniformly for $x$ in any compact subset of $\mathbb{R}$, where the polynomials $H_{F,m}(x)\in\mathbb{C}[x]$ are defined either by the generating function $F(-t)e^{xt}=\sum_{m=0}^\infty H_{F,m}(x)t^m/m!$ or in closed form by $H_{F,m}(x):=m!\sum_{l=0}^m (-1)^{m-l}c_{m-l}x^l/l!$.
\end{thm}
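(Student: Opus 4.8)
The plan is a direct computation: substitute the indicated argument into the Jensen polynomial, insert the hypothesised asymptotic expansion of $\omega_{n+j}/\omega_n$, rescale by $\delta(n)^{-d}$, and then read off the limit by determining which terms survive. By the definition $J_\omega^{d,n}(X)=\sum_{j=0}^d\binom{d}{j}\omega_{n+j}X^j$ and division by $\omega_n$,
\begin{align*}
\frac{1}{\omega_n}J_\omega^{d,n}\!\left(\frac{\delta(n)x-1}{E(n)}\right)=\sum_{j=0}^d\binom{d}{j}\left(\frac{\omega_{n+j}}{\omega_n}E(n)^{-j}\right)\bigl(\delta(n)x-1\bigr)^j.
\end{align*}
I would now insert $\frac{\omega_{n+j}}{\omega_n}E(n)^{-j}=\sum_{i=0}^dC_i(n)\delta(n)^ij^i+\eps_j(n)$ with $\eps_j(n)=o(\delta(n)^d)$ (uniformly in $j$, there being only $d+1$ values of $j$), multiply through by $\delta(n)^{-d}$, and expand $(\delta(n)x-1)^j=\sum_{k=0}^j\binom{j}{k}(-1)^{j-k}\delta(n)^kx^k$. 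Collecting powers of $x$ yields
\begin{align*}
\frac{\delta(n)^{-d}}{\omega_n}J_\omega^{d,n}\!\left(\frac{\delta(n)x-1}{E(n)}\right)=\sum_{k=0}^d x^k\sum_{i=0}^d C_i(n)\,\delta(n)^{\,i+k-d}\,S_{i,k}\;+\;R_n(x),
\end{align*}
where $S_{i,k}:=\sum_{j=0}^{d}\binom{d}{j}\binom{j}{k}(-1)^{j-k}j^i$ and $R_n(x)$ collects the $\eps_j(n)$-contributions. Since $\sum_{j=0}^d\binom{d}{j}\lvert\delta(n)x-1\rvert^j\leqslant(2+\delta(n)\lvert x\rvert)^d$ is bounded on every compact $x$-set while $\delta(n)\to0$, one checks that $R_n(x)=o(1)$ uniformly on compacta.

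The crux is the combinatorial quantity $S_{i,k}$. Writing $\sum_{j=0}^d(-1)^{d-j}\binom{d}{j}f(j)=(\Delta^df)(0)$ for the $d$-th forward difference of $f$ at $0$, which vanishes for every polynomial $f$ of degree $<d$ and equals $d!$ times the leading coefficient when $\deg f=d$, and noting that $j\mapsto\binom{j}{k}j^i$ is a polynomial in $j$ of degree $i+k$ with leading coefficient $1/k!$, I obtain $S_{i,k}=0$ for $i+k<d$ and $S_{i,k}=(-1)^{d+k}\,d!/k!$ for $i+k=d$. Hence in the double sum above every term with $i+k<d$ --- precisely the terms whose factor $\delta(n)^{i+k-d}$ could diverge --- is identically zero; every term with $i+k>d$ tends to $0$ because $\delta(n)^{i+k-d}\to0$ and $C_i(n)$ is convergent; and only the terms with $i+k=d$ remain. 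Using $C_i(n)\to c_i$ they add up to
\begin{align*}
\sum_{i=0}^d c_i\,(-1)^{d+(d-i)}\frac{d!}{(d-i)!}\,x^{d-i}=\sum_{i=0}^d(-1)^i\frac{d!}{(d-i)!}\,c_i\,x^{d-i}=d!\sum_{l=0}^d\frac{(-1)^{d-l}c_{d-l}}{l!}\,x^l,
\end{align*}
which is precisely the closed form of $H_{F,d}(x)$ recorded in the statement; its equivalence with $F(-t)e^{xt}=\sum_m H_{F,m}(x)t^m/m!$ is a one-line coefficient extraction.

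Finally, for each $n$ the left-hand side above is a polynomial in $x$ of degree at most $d$ whose coefficient of $x^k$ --- including the $o(1)$ piece coming from $R_n$ --- has just been shown to converge to the coefficient of $x^k$ in $H_{F,d}$; convergence of the finitely many coefficients of a polynomial of bounded degree forces uniform convergence on every compact subset of $\mathbb{R}$, which is the assertion. I expect the only real obstacle to be the bookkeeping in the first two steps: one must verify that \emph{all} of the a priori divergent terms cancel, and this is exactly the content of the identity $S_{i,k}=0$ for $i+k<d$; once that is in hand, the rest is routine.
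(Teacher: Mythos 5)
Your argument is correct: the substitution into $J_\omega^{d,n}$, the identification of $S_{i,k}$ as a $d$-th finite difference of the polynomial $j\mapsto\binom{j}{k}j^i$ (so that it vanishes for $i+k<d$ and equals $(-1)^{d+k}d!/k!$ for $i+k=d$), the uniform control of the error term on compacta, and the final passage from coefficientwise to locally uniform convergence are all sound, and the surviving terms do assemble into the closed form of $H_{F,d}(x)$. Note, however, that the paper itself offers no proof of this statement --- it is quoted from Griffin, Ono, Rolen and Zagier \cite{Griffin} as an external tool --- so the only meaningful comparison is with that source, whose proof proceeds by essentially the same direct expansion and finite-difference cancellation you carried out.
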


It is not clear how one can apply the above result in practice. In fact, Griffin et al. use the criterion to prove that for every positive integer $d$ the partition function $p(n)$ fulfills the order $d$ Tur\'an inequality for all but finitely many values of $n$. More precisely, they obtain the Hermite polynomials $H_m(x)$ as the polynomials $H_{F,m}(x)$ in Theorem \ref{theorem: Griffin-Ono-Rolen-Zagier}. Let us recall that they define the Hermit polynomials via the generating function
\begin{align*}
    \sum_{j=0}^\infty H_j(x)\frac{t^j}{j!}=e^{-j^2+jx}=1+jx+\frac{j^2}{2!}(x^2-2)+\cdots.
\end{align*}
Since these polynomials have only distinct real roots, and since the property of a polynomial with only real roots is invariant under small deformation, the required phenomenon for $p(n)$ follows.

On the other hand, we investigate the higher order Tur\'an inequalities for quasi-polynomial-like functions 
\begin{align*}
    f(n)=c_{l}(n)n^{l}+c_{l-1}(n)n^{l-1}+\cdots+c_r(n)n^r+o(n^r),
\end{align*}
where $r,l\in\mathbb{N}$, $l\geqslant r$ and the coefficients $c_r(n),c_{r+1}(n),\ldots,c_{l}(n)$ depend on the residue class of $n\pmod*{M}$ for some positive integer $M\geqslant2$. Therefore, we will probably get another family of orthogonal polynomials in Theorem \ref{theorem: Griffin-Ono-Rolen-Zagier}. The generalized Laguerre polynomials $L_n^{(\alpha)}(x)$ for $\alpha>-1$ are defined via the following conditions of orthogonality and normalization
\begin{align*}
    \int_0^\infty e^{-x}x^\alpha L_n^{(\alpha)}(x)L_m^{(\alpha)}(x)dx=\Gamma(\alpha+1)\binom{n+\alpha}{n}\delta_{n,m},
\end{align*}
where $\Gamma$ denotes the Euler gamma function, $\delta_{i,j}$ is the Kronecker delta and $n,m=0,1,2,\ldots.$ Moreover, we demand that the coefficient of $x^n$ in the polynomial $L_n^{(\alpha)}(x)$ of degree $n$ have the sign $(-1)^n$. One can figure out the explicit representation of these polynomials, namely,
\begin{align*}
    L_n^{(\alpha)}(x)=\sum_{j=0}^n\binom{n+\alpha}{n-j}\frac{(-x)^j}{j!}.
\end{align*}
Hence, we have that
\begin{align*}
    L_0^{(\alpha)}(x)&=1,\\
    L_1^{(\alpha)}(x)&=-x+(\alpha+1),\\
    L_2^{(\alpha)}(x)&=\frac{x^2}{2}-(\alpha+2)x+\frac{(\alpha+1)(\alpha+2)}{2},\\
    L_3^{(\alpha)}(x)&=\frac{-x^3}{6}+\frac{(\alpha+3)x^2}{2}-\frac{(\alpha+2)(\alpha+3)x}{2}+\frac{(\alpha+1)(\alpha+2)(\alpha+3)}{6},
\end{align*}
and so on. It is well-known that if $\alpha$ is non-negative, then $L_n^{(\alpha)}(x)$ has exactly $n$ positive real roots. For more information about both the Hermite polynomials and the Laguerre polynomials we encourage the reader to see \cite{Szego2}. 

Finally, instead of repeating the text from Introduction related to the Laguerre inequalities, we just recall that for an arbitrary sequence $\omega=\left(\omega_i\right)_{i=0}^\infty$ of real numbers the Laguerre inequality of order $d$ at $n$ is defined via
\begin{align*}
    \sum_{j=0}^{2d}(-1)^{j+d}\binom{2d}{j}\omega_{n+j}\omega_{n+2d-j}\geqslant0.
\end{align*}
In order to deal with this issue for quasi-polynomial-like functions we will need some basic identities involving binomial coefficients, which are omitted here and collected in Section 4.

\iffalse
Recently, Wagner \cite{Wagner2} put this kind of results into a framework related to the Laguerre-P\'olya class.  In particular, he proved the following result.
\begin{thm}[Wagner]\label{theorem: Wagner}
    If $\omega=\left(\omega_i\right)_{i=0}^\infty$ is such a sequence of real numbers that satisfies Griffin, Ono, Rolen, and Zagier’s criterion (Theorem \ref{theorem: Griffin-Ono-Rolen-Zagier}), then we also have that
    \begin{align*}
        \sum_{j=0}^{2k}\frac{(-1)^{j+k}}{(2k)!}\binom{2k}{j}\omega_{n+j}\omega_{n+2k-j}\geqslant0, \hspace{1.5cm} 0\leqslant k\leqslant d
    \end{align*}
    for all sufficiently large values of $n$.
\end{thm}
\noindent For more details regarding that issue, we refer the reader to \cite{Wagner2}.\fi

Now, we are ready to proceed to the main part of the manuscript.

\section{The higher order Tur\'an inequalities for quasi-polynomial-like functions}

The main goal of this section is to prove the following characterization.

\begin{thm}\label{theorem: Criterion for quasi-polynomial-like functions}
    Let $f(n)$ be a quasi-polynomial-like function of the form
    \begin{align*}
        f(n)=c_{l}n^{l}+c_{l-1}n^{l-1}+\cdots+c_{l-d}n^{l-d}+o(n^{l-d}),
    \end{align*}
    for some $1\leqslant d\leqslant l$. Then, for every $1 \leqslant j \leqslant d$ the sequence $(f(n))_{n=0}^\infty$ satisfies the order $j$ Tur\'an inequality for all but finitely many values of $n$.
\end{thm}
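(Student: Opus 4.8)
The plan is to apply the Griffin--Ono--Rolen--Zagier criterion (Theorem~\ref{theorem: Griffin-Ono-Rolen-Zagier}) to the sequence $\omega_n=f(n)$ with the choices $E(n)=1$ and $\delta(n)=1/n$, and then to recognise the limiting Jensen polynomial as a scalar multiple of a generalized Laguerre polynomial, whose hyperbolicity is classical. We may assume $c_l>0$: replacing $f$ by $-f$ changes $J_f^{d,n}$ only by an overall sign, hence affects neither its hyperbolicity nor any Tur\'an inequality. Then $f(n)\sim c_ln^l$, so $f(n)>0$ for all $n$ larger than some $N_0$, the ratios below are well defined, and the indices $n<N_0$ are irrelevant to the statement. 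Write $P(n):=c_ln^l+c_{l-1}n^{l-1}+\cdots+c_{l-d}n^{l-d}$; since these coefficients are genuine constants, $P$ is a fixed polynomial, $f(n)=P(n)+o(n^{l-d})$, and for each fixed $j\in\{0,1,\dots,d\}$ one has $f(n+j)/f(n)=P(n+j)/P(n)+o(n^{-d})$.

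The main step is to expand $P(n+j)/P(n)$ as a series in $1/n$. Factoring $P(n)=c_ln^l\bigl(1+a_1n^{-1}+\cdots+a_dn^{-d}\bigr)$ with $a_i=c_{l-i}/c_l$, we get, for $n$ large,
\begin{align*}
\frac{P(n+j)}{P(n)}=\Bigl(1+\tfrac{j}{n}\Bigr)^{\!l}\cdot\frac{1+a_1(n+j)^{-1}+\cdots+a_d(n+j)^{-d}}{1+a_1n^{-1}+\cdots+a_dn^{-d}}=\sum_{s\geqslant0}R_s(j)\,n^{-s},
\end{align*}
with each $R_s$ a polynomial. The crucial bookkeeping claim is that $\deg_j R_s\leqslant s$ and that the coefficient of $j^s$ in $R_s(j)$ equals $\binom{l}{s}$: the factor $(1+j/n)^l$ contributes $\binom{l}{s}j^sn^{-s}$ at order $n^{-s}$, whereas the quotient of the two $a$-series contributes, at order $n^{-s}$, only a polynomial in $j$ of degree $\leqslant s-1$ (since $(n+j)^{-i}-n^{-i}=O(n^{-i-1})$ with polynomial-in-$j$ coefficients of degree one below the order). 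Truncating at order $n^{-d}$ --- the tail and the earlier error are $o(n^{-d})$ since $j$ is bounded --- and re-expanding in powers of $j$ yields exactly
\begin{align*}
\frac{f(n+j)}{f(n)}\,E(n)^{-j}=\sum_{i=0}^{d}C_i(n)\,\delta(n)^{i}j^{i}+o\bigl(\delta(n)^{d}\bigr)\qquad(n\to\infty),
\end{align*}
uniformly in $j\in\{0,\dots,d\}$, with $C_i(n)=\binom{l}{i}+O(1/n)\to\binom{l}{i}=:c_i$. Thus all hypotheses of Theorem~\ref{theorem: Griffin-Ono-Rolen-Zagier} hold, with the power series $F(t)=\sum_{i\geqslant0}\binom{l}{i}t^i=(1+t)^l$ (the coefficients $c_i$ with $i>d$ being immaterial). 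I expect this verification --- tracking $j$-degrees through the truncation and re-expansion --- to be the only genuinely delicate part of the proof.

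The criterion now gives, uniformly on compact subsets of $\mathbb{R}$,
\begin{align*}
\lim_{n\to\infty}\frac{n^{d}}{f(n)}\,J_f^{d,n}\!\Bigl(\tfrac{x}{n}-1\Bigr)=H_{F,d}(x),\qquad\text{where }(1-t)^l e^{xt}=\sum_{m\geqslant0}H_{F,m}(x)\frac{t^m}{m!}.
\end{align*}
Extracting coefficients, $H_{F,d}(x)=d!\sum_{k=0}^{d}(-1)^{d-k}\binom{l}{d-k}\frac{x^k}{k!}=(-1)^d\,d!\,L_d^{(l-d)}(x)$, the generalized Laguerre polynomial with parameter $\alpha=l-d$. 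Since $d\leqslant l$ we have $\alpha\geqslant0>-1$, so $L_d^{(l-d)}$ has exactly $d$ real roots, all of them simple (and positive); hence $H_{F,d}$ is hyperbolic with simple roots and $\deg H_{F,d}=d$.

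Finally, the polynomial on the left above is $J_f^{d,n}$ precomposed with the invertible affine substitution $y=x/n-1$ and multiplied by the positive constant $n^d/f(n)$, so it is hyperbolic if and only if $J_f^{d,n}$ is; moreover it is a real polynomial whose leading coefficient $f(n+d)/f(n)$ tends to $1$. Choosing disjoint closed intervals around the $d$ simple roots of $H_{F,d}$ on whose endpoints $H_{F,d}$ takes nonzero values of alternating sign, the uniform convergence forces the left-hand polynomial to change sign on each such interval once $n$ is large, producing $d$ distinct real roots; since its degree is eventually $d$, it --- and hence $J_f^{d,n}$ --- is hyperbolic for all sufficiently large $n$. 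By Hermite's theorem and the discussion in Section~2, hyperbolicity of $J_f^{d,n}$ is equivalent to $(f(n))_{n\geqslant0}$ satisfying the order $j$ Tur\'an inequality (at $n$) for every $j\in\{1,\dots,d\}$; therefore, for each such $j$, the order $j$ Tur\'an inequality holds for all but finitely many $n$, which is the assertion.
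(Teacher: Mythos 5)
Your proposal is correct and follows essentially the same route as the paper's own proof: both apply Theorem \ref{theorem: Griffin-Ono-Rolen-Zagier} with $\omega_n=f(n)$, $E(n)=1$, $\delta(n)=1/n$, obtain $C_i(n)\to\binom{l}{i}$, identify $H_{F,m}(x)=(-1)^m m!\,L_m^{(l-m)}(x)$ with $l-m\geqslant 0$, and conclude via the stability of hyperbolicity under small deformation. Your bookkeeping of the $j$-degrees and the final root-separation argument are somewhat more explicit than the paper's, but the substance is identical.
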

\begin{proof}
    At first, let us fix $0\leqslant j\leqslant d$ and expand $f(n+j)/f(n)$. We have that
    \begin{align*}
        \frac{f(n+j)}{f(n)}&=\frac{c_{l}(n+j)^{l}+c_{l-1}(n+j)^{l-1}+\cdots+c_{l-d}(n+j)^{l-d}+o((n+j)^{l-d})}{c_{l}n^{l}+c_{l-1}n^{l-1}+\cdots+c_{l-d}n^{l-d}+o(n^{l-d})}\\
        &=\frac{c_{l}n^{l}+c_{l-1}n^{l-1}\cdots+c_{l-d}n^{l-d}+o(n^{l-d})}{c_{l}n^{l}+\cdots+c_{l-d}n^{l-d}+o(n^{l-d})}\\
        &+j\cdot\frac{lc_{l}n^{l-1}+(l-1)c_{l-1}n^{l-2}+\cdots+(l-d)c_{l-d}n^{l-d-1}+o(n^{l-d})}{c_{l}n^{l}+\cdots+c_{l-d}n^{l-d}+o(n^{l-d})}\\
        &+j^2\cdot\frac{\binom{l}{2}c_{l}n^{l-2}+\binom{l-1}{2}c_{l-1}n^{l-3}+\cdots+\binom{l-d}{2}c_{l-d}n^{l-d-2}+o(n^{l-d})}{c_{l}n^{l}+\cdots+c_{l-d}n^{l-d}+o(n^{l-d})}\\
        &\vdots\\
        &+j^d\cdot\frac{\binom{l}{d}c_{l}n^{l-d}+\binom{l-1}{d}c_{l-1}n^{l-1-d}+\cdots+\binom{l-d}{d}c_{l-d}n^{l-2d}+o(n^{l-d})}{c_{l}n^{l}+\cdots+c_{l-d}n^{l-d}+o(n^{l-d})}\\
        &+\frac{o((n+j)^{l-d})}{c_{l}n^{l}+\cdots+c_{l-d}n^{l-d}+o(n^{l-d})}\\
        &=\frac{c_{l}n^{l}+c_{l-1}n^{l-1}\cdots+c_{l-d}n^{l-d}+o(n^{l-d})}{c_{l}n^{l}+\cdots+c_{l-d}n^{l-d}+o(n^{l-d})}\\
        &+j\cdot\frac{1}{n}\frac{lc_{l}n^{l-1}+(l-1)c_{l-1}n^{l-2}+\cdots+(l-d)c_{l-d}n^{l-d-1}+o(n^{l-d})}{c_{l}n^{l-1}+\cdots+c_{l-d-1}n^{l-d-1}+o(n^{l-d-1})}\\
        &+j^2\cdot\frac{1}{n^2}\frac{\binom{l}{2}c_{l}n^{l-2}+\binom{l-1}{2}c_{l-1}n^{l-3}+\cdots+\binom{l-d}{2}c_{l-d}n^{l-d-2}+o(n^{l-d})}{c_{l}n^{l-2}+\cdots+c_{l-d}n^{l-d-2}+o(n^{l-d-2})}\\
        &\vdots\\
        &+j^d\cdot\frac{1}{n^d}\frac{\binom{l}{d}c_{l}n^{l-d}+\binom{l-1}{d}c_{l-1}n^{l-1-d}+\cdots+\binom{l-d}{d}c_{l-d}n^{l-2d}+o(n^{l-d})}{c_{l}n^{l-d}+\cdots+c_{l-2d}n^{l-2d}+o(n^{l-2d})}\\
        &+\frac{o((n+j)^{l-d})}{c_{l}n^{l}+\cdots+c_{l-d}n^{l-d}+o(n^{l-d})}.
    \end{align*}
    Now, it is not difficult to see that we can apply Theorem \ref{theorem: Griffin-Ono-Rolen-Zagier} with $\omega_n=f(n)$, $E(n)=1$, and $\delta(n)=n^{-1}$. Indeed, we get
    \begin{align*}
        \frac{f(n+j)}{f(n)}=\sum_{i=0}^dC_i(n)\left(\frac{1}{n}\right)^ij^i+o\left(\left(\frac{1}{n}\right)^d\right)\hspace{1cm}\text{as } n\to\infty,
    \end{align*}
    where
    \begin{align*}
        C_s(n)=\frac{\binom{l}{s}c_{l}n^{l-s}+\binom{l-1}{s}c_{l-1}n^{l-1-s}+\cdots+\binom{l-d}{s}c_{l-d}n^{l-d-s}+o(n^{l-d})}{c_{l}n^{l-s}+\cdots+c_{l-d}n^{l-d-s}+o(n^{l-d-s})}
    \end{align*}
    for any $0\leqslant s\leqslant d$. Hence, it is clear that 
    \begin{align*}
        \lim_{n\to\infty}C_s(n)=\binom{l}{s}
    \end{align*}
    for every $0\leqslant s\leqslant d$, and we obtain that
    \begin{align*}
        H_{F,m}(x)=m!\sum_{j=0}^m(-1)^{m-j}\binom{l}{m-j}\frac{x^j}{j!}&=(-1)^mm!\sum_{j=0}^m\binom{m+(l-m)}{m-j}\frac{(-x)^j}{j!}\\
        &=(-1)^mm!L_m^{(l-m)}(x)
    \end{align*}
    for each $0\leqslant m \leqslant d$, where $L_m^{(l-m)}(x)$ is the generalized Laguerre polynomial. Since $l-m\geqslant0$, the polynomials $L_m^{(l-m)}(x)$ have only positive real roots (see, the antepenultimate paragraph of Section 2). Finally, Theorem \ref{theorem: Griffin-Ono-Rolen-Zagier} asserts that 
    \begin{align*}
        \lim_{n\to\infty}\left(\frac{n^s}{f(n)}J_f^{s,n}\left(\frac{x}{n}-1\right)\right)=(-1)^ss!L_s^{(l-s)}(x),
    \end{align*}
    uniformly for $x$ in any compact subset of $\mathbb{R}$ for every $1\leqslant s\leqslant d$. However, we know that the property of a polynomial with real coefficients is invariant under small deformation. Thus, the required phenomenon for $f(n)$ follows.
\end{proof}

Theorem \ref{2.4} and Theorem \ref{theorem: Criterion for quasi-polynomial-like functions} deliver an interesting criterion for the order $d$ Tur\'an inequality for the $A$-partition function.
\begin{thm}\label{theorem: higher order Turan A-partition function}
    Let $A$ be a finite multiset (or set) of positive integers with $\#A=k$, and let $1\leqslant d<k$ be fixed. Suppose further that $\gcd B=1$ for every $(k-d)$-multisubset $B\subset A$. Then, for any $1\leqslant j \leqslant d$ the sequence $(p_A(n))_{n=0}^\infty$ fulfills the order $j$ Tur\'an inequality for all sufficiently large values of $n$.
\end{thm}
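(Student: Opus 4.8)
The plan is to show that the hypothesis on $A$ forces $p_A(n)$ to be a quasi-polynomial-like function whose leading $d+1$ coefficients are genuine constants, and then to invoke Theorem~\ref{theorem: Criterion for quasi-polynomial-like functions}. The bridge between the two is Almkvist's formula, Theorem~\ref{2.4}.

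First I would invoke Theorem~\ref{2.4} with the integer in its statement taken to be $k-d$; this is legitimate since, by hypothesis, $\gcd B=1$ for every $(k-d)$-multisubset $B\subset A$. It yields
\begin{align*}
    p_A(n)=\frac{1}{\prod_{i=1}^k a_i}\sum_{i=0}^{d}\sigma_i(a_1,\ldots,a_k)\frac{(n+s_1/2)^{k-1-i}}{(k-1-i)!}+O(n^{k-d-2}).
\end{align*}
Expanding each shifted power $(n+s_1/2)^{k-1-i}$ and collecting terms, the sum on the right is an ordinary polynomial in $n$ of degree $k-1$ whose coefficients of $n^{k-1},n^{k-2},\ldots,n^{k-1-d}$ are fixed real numbers determined by the $a_i$ alone; in particular the leading coefficient equals $1/\prod_{i=1}^k a_i\neq0$, and none of these coefficients depends on the residue class of $n$. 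Since $O(n^{k-d-2})=o(n^{k-1-d})$, this exhibits
\begin{align*}
    p_A(n)=c_{k-1}n^{k-1}+c_{k-2}n^{k-2}+\cdots+c_{k-1-d}n^{k-1-d}+o(n^{k-1-d})
\end{align*}
with all the displayed $c_i$ constant, which is precisely the shape required in Theorem~\ref{theorem: Criterion for quasi-polynomial-like functions} with $l=k-1$. Because $1\leqslant d<k$ gives $1\leqslant d\leqslant l$, that theorem applies and at once delivers the order $j$ Tur\'an inequality for $(p_A(n))_{n=0}^\infty$ for all sufficiently large $n$ and every $1\leqslant j\leqslant d$.

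I do not expect a genuine obstacle: the statement is in essence a corollary of Theorems~\ref{2.4} and~\ref{theorem: Criterion for quasi-polynomial-like functions}. The only points that need attention are the translation between the index in Almkvist's theorem and the quantity $k-d$ occurring in the hypothesis, the routine check that expanding the shifted binomials introduces no residue-class dependence into the top $d+1$ coefficients (it cannot, since every ingredient is constant), and the observation that the leading coefficient $1/\prod_{i=1}^k a_i$ is nonzero --- this last fact being exactly what makes the asymptotic expansion of $p_A(n+j)/p_A(n)$ inside the proof of Theorem~\ref{theorem: Criterion for quasi-polynomial-like functions} valid.
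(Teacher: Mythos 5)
Your proposal is correct and follows exactly the route the paper takes: the paper's own proof is the one-line observation that the theorem is a direct consequence of Theorem~\ref{2.4} (applied with the index equal to $k-d$) combined with Theorem~\ref{theorem: Criterion for quasi-polynomial-like functions}, and you have simply filled in the bookkeeping --- the constancy of the top $d+1$ coefficients, the nonvanishing of $1/\prod_{i=1}^k a_i$, and the estimate $O(n^{k-d-2})=o(n^{k-1-d})$ --- correctly.
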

\begin{proof}
    That is a direct consequence of both Theorem \ref{2.4} and Theorem \ref{theorem: Criterion for quasi-polynomial-like functions}.
\end{proof}

An interesting question arises whether Theorem \ref{theorem: Criterion for quasi-polynomial-like functions} and Theorem \ref{theorem: higher order Turan A-partition function} present also the necessary conditions for order $d$ Tur\'an inequality for both quasi-polynomial-like functions and $A$-partition functions, respectively. It is true for the order $2$ Tur\'an inequality which follows directly from Gajdzica's papers \cite{KG2,KG3}. However, it is not true in general as the forthcoming examples show. 
\begin{ex}\label{example: quasi-polynomial-like counterexample}
    Let us investigate the order $3$ Tur\'an inequality for the function
    \begin{align*}
        f(n)=\begin{cases}
        n^{15}+n^{14}+n^{13}+n^{12}+n^{11}+o(n^{11}), & \text{if } n\not\equiv0 \pmod*{4},\\
         n^{15}+n^{14}+n^{13}+2n^{12}+n^{11}+o(n^{11}), & \text{if } n\equiv0 \pmod*{4}.
        \end{cases}
    \end{align*}
    It is easy to see that the assumptions from Theorem \ref{theorem: Criterion for quasi-polynomial-like functions} are not satisfied. Nevertheless, it turns out that the function which directly corresponds to the third order Tur\'an inequality takes the form
    \begin{align*}
    4&\left(f(n)^2-f(n-1)f(n+1)\right)\left(f(n+1)^2-f(n)f(n+2)\right)\\
    -&\left(f(n)f(n+1)-f(n-1)f(n+2)\right)^2=\begin{cases}
        12411n^{54}+o(n^{54}), & \text{if } n\equiv0 \pmod*{4},\\
        12771n^{54}+o(n^{54}), & \text{if } n\equiv1 \pmod*{4},\\
        12539n^{54}+o(n^{54}), & \text{if } n\equiv2 \pmod*{4},\\
         12659n^{54}+o(n^{54}), & \text{if } n\equiv3 \pmod*{4};
        \end{cases}
\end{align*}
and is positive for all sufficiently large values of $n$. Hence, we conclude that Theorem \ref{def: quasi-polynomial-like function} is not an optimal criterion.
\end{ex}

In the case of the $A$-partition function, we present the following counterexample.

\begin{ex}\label{Example: 1,1,1,1,300}
    Let us assume that $A=\{1,\text{$\textcolor{blue}{1},\textcolor{red}{1},\textcolor{brown}{1}$},300\}$, and examine the order $4$ Tur\'an inequality.  In the fashion of Example \ref{example: quasi-polynomial-like counterexample}, we wish to define a function $f(n)$ which directly corresponds to that issue. It is tedious but elementary to show that a sequence $\omega=\left(\omega_i\right)_{i=0}^\infty$ fulfills the fourth Tur\'an inequality if the following
    \begin{align*}
        &54 \omega_n\omega_{n+1}^2 \omega_{n+2} \omega_{n+4}^2+\omega_n^3 \omega_{n+4}^3+108\omega_{n+1}^3 \omega_{n+2} \omega_{n+3} \omega_{n+4}+36\omega_{n+1}^2 \omega_{n+2}^2 \omega_{n+3}^2\\
        -&12 \omega_n^2 \omega_{n+1}\omega_{n+3} \omega_{n+4}^2-54\omega_{n+1}^2 \omega_{n+2}^3 \omega_{n+4} -6 \omega_n\omega_{n+1}^2 \omega_{n+3}^2 \omega_{n+4}-54 \omega_n \omega_{n+2}^3 \omega_{n+3}^2\\
        -&27\omega_{n+1}^4 \omega_{n+4}^2-180 \omega_n\omega_{n+1} \omega_{n+2}^2 \omega_{n+3} \omega_{n+4}-27 \omega_n^2 \omega_{n+3}^4+108 \omega_n\omega_{n+1} \omega_{n+2} \omega_{n+3}^3\\
        -&64\omega_{n+1}^3 \omega_{n+3}^3-18 \omega_n^2 \omega_{n+2}^2 \omega_{n+4}^2+81 \omega_n \omega_{n+2}^4 \omega_{n+4}+54 \omega_n^2 \omega_{n+2} \omega_{n+3}^2 \omega_{n+4}\geqslant0
    \end{align*}
    is satisfied for every $n\in\mathbb{N}$. Therefore, let us put $\omega_n:=p_A(n)$ and denote the left hand side of the above inequality by $f_A(n)$. Since $\gcd(300)\not=1$, we see that the assumptions from Theorem \ref{theorem: higher order Turan A-partition function} do not hold if $d=4$. Notwithstanding, one can carry out the appropriate computations in Mathematica \cite{WM} and check that $f_A(n)$ is a quasi-polynomial of degree $12$ with coefficients depending on $n\pmod*{300}$. It might be also verified that the leading coefficient of $f_A(n)$ attains the smallest value whenever $n\not\equiv296 \pmod*{300}$ --- in all of these cases, we have
    \begin{align*}
        f_A(n)=\frac{n^{12}}{2^{18}\cdot3^9\cdot5^{12}}+o\left(n^{12}\right).
    \end{align*}
    The above discussion agrees with the plot of $f_A(n)$ for $1\leqslant n\leqslant10^4$, see Figure 1.
    \begin{center}
\begin{figure}[!htb]
   \begin{minipage}{1\textwidth}
     \centering
     \includegraphics[width=1\linewidth]{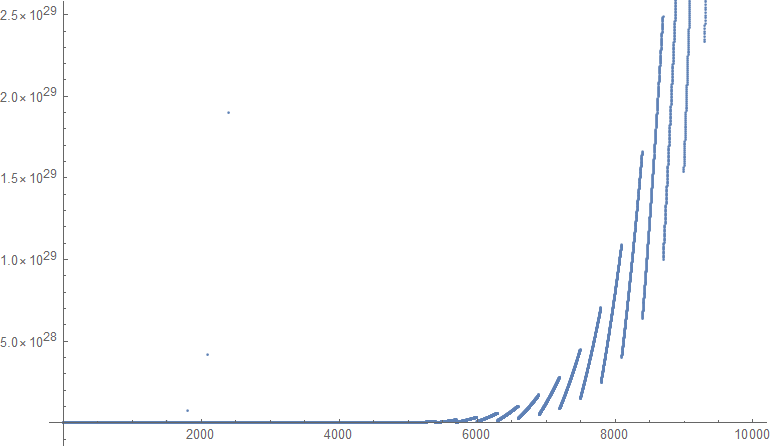}
     \caption{The values of $f_A(n)$ for $A=\{1,\text{$\textcolor{blue}{1},\textcolor{red}{1},\textcolor{brown}{1}$},300\}$ and $1\leqslant n \leqslant10^4$.}
   \end{minipage}\hfill
\end{figure}
\end{center}

Hence, we conclude that Theorem \ref{theorem: higher order Turan A-partition function} is not optimal, as well.
\end{ex}

At the end of this section, let us exhibit two other examples. Sometimes we can not conclude the appropriate order $d$ Tur\'an inequality if the requirements from Theorem \ref{theorem: Criterion for quasi-polynomial-like functions} or Theorem \ref{theorem: higher order Turan A-partition function} are not satisfied.

\begin{ex}
    Let us consider a quasi-polynomial-like function of the form
    \begin{align*}
        f(n)=\begin{cases}
        n^{15}+n^{14}+n^{13}+n^{12}+n^{11}+o(n^{11}), & \text{if } n\not\equiv0 \pmod*{4},\\
         n^{15}+n^{14}+n^{13}+500n^{12}+n^{11}+o(n^{11}), & \text{if } n\equiv0 \pmod*{4}.
        \end{cases}
    \end{align*}
    We would like to investigate the order $3$ Tur\'an inequality. However, it is clear that the assumptions from Theorem \ref{theorem: Criterion for quasi-polynomial-like functions} are not satisfied; and one may calculate that
    \begin{align*}
    4&\left(f(n)^2-f(n-1)f(n+1)\right)\left(f(n+1)^2-f(n)f(n+2)\right)\\
    -&\left(f(n)f(n+1)-f(n-1)f(n+2)\right)^2=-266341n^{54}+o(n^{54}),
\end{align*}
whenever $n\equiv2\pmod*{4}$. Hence, $f(n)$ can not satisfy the order $3$ Tur\'an inequality for all sufficiently large values of $n$, as required.
\end{ex}

As an instance for an $A$-partition function, we take a finite analogue of the partition function $p(n)$.
\begin{ex}\label{Example: 1,2,3,...}
    For any positive integer $m$, let us put $A_m:=\{1,2,\ldots,m\}$. We want to consider the third order Tur\'an inequality for $p_{A_6}(n)$ and $p_{A_7}(n)$. In order to make the text more transparent, we set
    \begin{align*}
        g_{A_m}(n):=4&\left(p_{A_m}^2(n)-p_{A_m}(n-1)p_{A_m}(n+1)\right)\left(p_{A_m}^2(n+1)-p_{A_m}(n)p_{A_m}(n+2)\right)\\
    -&\left(p_{A_m}(n)p_{A_m}(n+1)-p_{A_m}(n-1)p_{A_m}(n+2)\right)^2
    \end{align*}
    Thus, $g_{A_m}(n)$ directly corresponds to the order $3$ Tur\'an inequality. It is clear that the demands from Theorem \ref{theorem: higher order Turan A-partition function} are not true for $A_6$. In fact, it turns out that, for instance,
    \begin{align*}
        g_{A_6}(n)=-\frac{2069n^{14}}{2^{24}\cdot3^{12}\cdot5^6}+o\left(n^{14}\right),
    \end{align*}
    whenever $n\equiv2\pmod*{60}$. On the other hand, one can check that the equality
    \begin{align*}
        g_{A_7}(n)=\frac{n^{18}}{2^{28}\cdot3^{14}\cdot5^7\cdot7^4}+o\left(n^{18}\right)
    \end{align*}
    is valid for every positive integer $n$, as required. The above discussion agrees with the plots of $g_{A_6}(n)$ and $g_{A_7}(n)$, see Figure 2 and Figure 3, respectively.
    \begin{center}
\begin{figure}[!htb]
   \begin{minipage}{0.5\textwidth}
     \centering
     \includegraphics[width=1\linewidth]{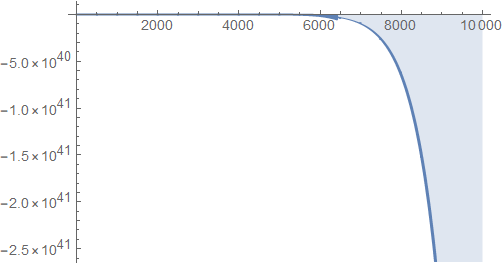}
     \caption{The values of $g_{A_6}(n)$ for $1\leqslant n \leqslant10^4$.}
   \end{minipage}\hfill
   \begin{minipage}{0.5\textwidth}
     \centering
     \includegraphics[width=1\linewidth]{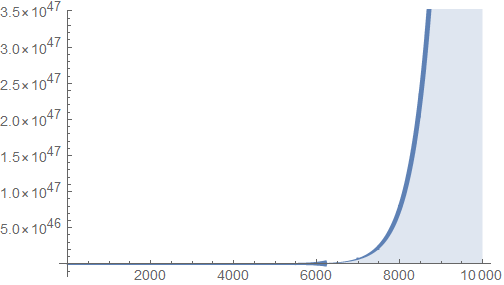}
     \caption{The values of $g_{A_7}(n)$ for $1\leqslant n \leqslant10^4$.}
   \end{minipage}
\end{figure}
\end{center}
\end{ex}

\section{The Laguerre inequalities for quasi-polynomial-like functions}

Now, we focus on the Laguerre inequalities for quasi-polynomial-like functions. As it was mentioned at the end of Section 2, we need to use a few binomial coefficient identities to deal with the issue. 

The first of them arises from comparing the coefficients of the expansions of both $(1-z)^{s}(1+z)^{s}$ and $(1-z^2)^{s}$ (see, \cite[Section 5.4]{GKP}).

\begin{lm}\label{Lemma 1}
    Let $s\in\mathbb{N}$ be fixed. Then for every even integer $0\leqslant n\leqslant s$, we have
    \begin{align*}
        \sum_{j=0}^n(-1)^j\binom{s}{j}\binom{s}{n-j}=(-1)^\frac{n}{2}\binom{s}{n/2}.
    \end{align*}
\end{lm}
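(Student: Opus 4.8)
The plan is to compute the coefficient of $z^n$ in the polynomial $(1-z)^s(1+z)^s$ in two different ways and equate the results, exactly as the hint in the statement (referencing \cite[Section 5.4]{GKP}) suggests.

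First I would expand each factor by the binomial theorem: $(1-z)^s = \sum_{a=0}^s \binom{s}{a}(-1)^a z^a$ and $(1+z)^s = \sum_{b=0}^s \binom{s}{b} z^b$. Multiplying these two series and collecting the terms with $z^a z^b = z^n$, i.e. those with $a+b=n$, gives that the coefficient of $z^n$ in $(1-z)^s(1+z)^s$ equals $\sum_{j=0}^n (-1)^j \binom{s}{j}\binom{s}{n-j}$, which is precisely the left-hand side of the claimed identity. Here one should note that the restriction $0 \leqslant n \leqslant s$ guarantees that all the indices appearing in the sum lie in the admissible range $\{0,1,\dots,s\}$, so no degenerate binomial coefficients need to be interpreted as zero — though in fact the convention $\binom{s}{m}=0$ for $m>s$ or $m<0$ makes the computation go through for all $n$ anyway.

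Next I would rewrite $(1-z)^s(1+z)^s = \big((1-z)(1+z)\big)^s = (1-z^2)^s$ and expand the right-hand side directly: $(1-z^2)^s = \sum_{m=0}^s \binom{s}{m}(-1)^m z^{2m}$. The coefficient of $z^n$ in this expression is $0$ when $n$ is odd, and when $n$ is even, say $n=2m$, it equals $(-1)^m \binom{s}{m} = (-1)^{n/2}\binom{s}{n/2}$. Equating the two computed coefficients of $z^n$ for even $n$ with $0 \leqslant n \leqslant s$ yields the stated formula.

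There is essentially no obstacle here: the argument is a routine coefficient-comparison. The only point requiring a modicum of care is the bookkeeping of index ranges — making sure that interpreting out-of-range binomial coefficients as zero (or, alternatively, invoking the hypothesis $n\leqslant s$) is done consistently so that the convolution sum and the direct expansion genuinely match term by term. I would state this convention explicitly at the start of the proof to keep things clean.
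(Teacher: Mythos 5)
Your proof is correct and follows exactly the route the paper indicates: comparing the coefficient of $z^n$ in $(1-z)^s(1+z)^s$, computed by the Cauchy product, with the coefficient of $z^n$ in $(1-z^2)^s$. The paper only sketches this (citing \cite[Section 5.4]{GKP}), so your write-up is simply a fully detailed version of the same argument.
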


To present the second one, we need to recall that the Stirling number of the second kind $\stirlingtwo{n}{k}$ enumerates the number of ways to partition a set of $n$ labelled objects into $k$ non-empty unlabelled subsets. Equivalently, it is the number of different equivalence relations with exactly $k$ equivalence classes that may be defined on an $n$ element set. It is worth noting that the following identities
\begin{align*}
    \stirlingtwo{n}{1}=1, \hspace{0.5cm}\stirlingtwo{n}{n}=1, \hspace{0.5cm}\stirlingtwo{n}{0}=0 \hspace{0.3cm}\text{ and } \hspace{0.3cm} \stirlingtwo{0}{0}=1
\end{align*}
hold for every positive integer $n$ as well as $\stirlingtwo{m}{k}=0$ whenever $0\leqslant m <k$. The succeeding lemma, together with the general introduction to the Stirling numbers, might be found in \cite[Section 6.1]{GKP}. %Moreover, it is also transparent $\stirlingtwo{n}{k}=0$ whenever $k>n$. The succeeding lemma, together with the general introduction to the Stirling numbers, might be found in \cite[Section 6.1]{GKP}.

\begin{lm}\label{Lemma 2}
    Let $u$ and $v$ be arbitrary non-negative integers. Then, we have
    \begin{align*}
        u!\stirlingtwo{v}{u}=\sum_{k=0}^u(-1)^{u-k}\binom{u}{k}k^v.
    \end{align*}
\end{lm}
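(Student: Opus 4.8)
The statement to prove is Lemma \ref{Lemma 2}, the classical identity
\begin{align*}
    u!\stirlingtwo{v}{u}=\sum_{k=0}^u(-1)^{u-k}\binom{u}{k}k^v.
\end{align*}

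\textbf{Approach.} The plan is to prove this by a direct combinatorial counting argument using inclusion--exclusion, which is the cleanest route and requires no generating-function machinery. The left-hand side $u!\stirlingtwo{v}{u}$ counts the number of surjective functions from a $v$-element set, say $[v]=\{1,2,\dots,v\}$, onto a $u$-element set, say $[u]=\{1,2,\dots,u\}$: indeed, choosing such a surjection amounts to partitioning $[v]$ into $u$ nonempty unlabelled blocks (that is $\stirlingtwo{v}{u}$ ways) and then assigning the $u$ blocks bijectively to the $u$ labels (that is $u!$ ways). So the whole task reduces to showing that the number of surjections $[v]\to[u]$ equals the alternating sum on the right.

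\textbf{Key steps, in order.} First I would set up the inclusion--exclusion. Let $S$ be the set of all functions $[v]\to[u]$, so $|S|=u^v$. For each $i\in[u]$, let $B_i\subseteq S$ be the set of functions whose image misses the element $i$. A function is a surjection precisely when it lies in none of the $B_i$, so the number of surjections is $\bigl|\,S\setminus\bigcup_{i=1}^u B_i\,\bigr|$. Second, I would apply the inclusion--exclusion formula: for any index set $T\subseteq[u]$ with $|T|=m$, the intersection $\bigcap_{i\in T}B_i$ consists of the functions $[v]\to[u]\setminus T$, of which there are $(u-m)^v$, and this count depends only on $m=|T|$. Hence
\begin{align*}
    \#\{\text{surjections }[v]\to[u]\}=\sum_{m=0}^u(-1)^m\binom{u}{m}(u-m)^v.
\end{align*}
Third, I would reindex the sum by substituting $k=u-m$, so that $m=u-k$, $(-1)^m=(-1)^{u-k}$, and $\binom{u}{m}=\binom{u}{u-k}=\binom{u}{k}$; as $m$ runs from $0$ to $u$, $k$ runs from $u$ down to $0$, giving exactly $\sum_{k=0}^u(-1)^{u-k}\binom{u}{k}k^v$. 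Combining this with the first step yields the claimed identity. Finally, I would check the degenerate cases to make sure the combinatorial interpretation is self-consistent: when $v<u$ there are no surjections and both sides vanish (the right side because $\sum_{k}(-1)^{u-k}\binom{u}{k}k^v$ is a finite difference of a polynomial of degree $v<u$, which is $0$), when $v=u$ both sides equal $u!$, and the boundary values $\stirlingtwo{0}{0}=1$, $\stirlingtwo{v}{0}=0$ for $v>0$ agree with the empty-sum conventions; these remarks also dovetail with the identities for $\stirlingtwo{n}{k}$ recalled just before the lemma.

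\textbf{Main obstacle.} Honestly there is no serious obstacle here — this is a textbook identity (it is exactly the one cited from \cite[Section 6.1]{GKP}), and the only point demanding a little care is the justification that $u!\stirlingtwo{v}{u}$ equals the number of surjections, together with clean bookkeeping in the inclusion--exclusion, particularly the observation that $|\bigcap_{i\in T}B_i|$ depends only on $|T|$, so that the sum over all $T$ collapses to a sum over $m$ weighted by $\binom{u}{m}$. An alternative, equally short proof proceeds purely algebraically from the finite-difference/Newton-forward-difference identity $\sum_{k=0}^u(-1)^{u-k}\binom{u}{k}k^v=\Delta^u x^v\big|_{x=0}$ combined with the known expansion $x^v=\sum_u \stirlingtwo{v}{u} x\fallingfactorial{u}$; I would mention this as a remark but present the combinatorial proof as the main argument since it is the most transparent and self-contained.
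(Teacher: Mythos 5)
Your proof is correct. Note, however, that the paper does not actually prove Lemma \ref{Lemma 2} at all: it simply cites the identity from \cite[Section 6.1]{GKP} (and the general discussion of Stirling numbers there), treating it as a known fact. Your self-contained inclusion--exclusion argument --- identifying $u!\stirlingtwo{v}{u}$ with the number of surjections $[v]\to[u]$, counting those surjections by excluding the sets $B_i$ of functions missing $i$, and reindexing $k=u-m$ --- is the standard proof of exactly this identity and is carried out correctly, including the degenerate cases $v<u$ and $u=0$. So you have supplied more than the paper does; the only difference is that the paper buys brevity by citation, while your argument buys self-containment at the cost of a paragraph. Either is acceptable here, since the lemma is classical and the paper's only use of it (in the proof of Theorem \ref{theorem: Laguerre quasi}) requires nothing beyond the statement itself.
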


Now, we are ready to state and prove the main result of this section.

\begin{thm}\label{theorem: Laguerre quasi}
    Let $f(n)$ be a quasi-polynomial-like function of the form
    \begin{align*}
        f(n)=c_{l}n^{l}+c_{l-1}n^{l-1}+\cdots+c_{l-2d}n^{l-2d}+o(n^{l-2d}),
    \end{align*}
    for some non-negative integer $d$ such that $2d\leqslant l$. Then, for every $0 \leqslant j \leqslant d$ the sequence $(f(n))_{n=0}^\infty$ satisfies the Laguerre inequality of order $j$ for all but finitely many values of $n$. In particular, we have that
    \begin{align*}
        \sum_{i=0}^{2d}(-1)^{i+d}\binom{2d}{i}f(n+i)f(n+2d-i)=(2d)!\binom{l}{d}c_l^2n^{2(l-d)}+o\left(n^{2(l-d)}\right).
    \end{align*}
\end{thm}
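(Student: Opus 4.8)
The plan is to compute the left-hand side $S_d(n):=\sum_{i=0}^{2d}(-1)^{i+d}\binom{2d}{i}f(n+i)f(n+2d-i)$ asymptotically, by expanding each factor $f(n+i)$ as a polynomial in $i$ with coefficients that are Laurent-type expansions in $n$, and then interchanging the order of summation so that the inner sum over $i$ becomes a pure binomial/combinatorial sum that we can evaluate using Lemma \ref{Lemma 1} and Lemma \ref{Lemma 2}. Concretely, writing $f(n+i)=\sum_{t=0}^{l}\binom{l}{t}c_l n^{l-t}i^t+(\text{lower-order families})$ — more carefully, $f(n+i)=f(n)+\sum_{t\geqslant 1}\frac{1}{t!}\Delta_t(n)\,i^{\underline{t}}$ or, even more simply, $f(n+i)=\sum_{t=0}^{l}a_t(n)i^t$ with $a_t(n)$ a quasi-polynomial-like expression whose top term is $\binom{l}{t}c_l n^{l-t}$ — the product $f(n+i)f(n+2d-i)$ becomes a double sum over exponents $t,u$ of monomials $i^t(2d-i)^u$ times $a_t(n)a_u(n)$. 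Expanding $(2d-i)^u$ binomially and collecting, $S_d(n)$ becomes a sum over $t,u,$ and a third index of $a_t(n)a_u(n)$ times a coefficient times $\sum_{i=0}^{2d}(-1)^i\binom{2d}{i}i^{t+(\text{something})}$, which is exactly the shape governed by Lemma \ref{Lemma 2} (Stirling numbers) and, after a symmetrization in $i\leftrightarrow 2d-i$, by Lemma \ref{Lemma 1}.

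The key structural point I would exploit is the alternating sign $(-1)^i\binom{2d}{i}$: this annihilates any polynomial in $i$ of degree $<2d$, so in the double sum only the terms with total $i$-degree at least $2d$ survive. Since each $a_t(n)$ carries $n^{l-t}$ at leading order, a surviving term with $i$-degree $t+u'\geqslant 2d$ contributes at most $n^{(l-t)+(l-u'')}$ and one checks the maximal such contribution is $n^{2(l-d)}$, achieved exactly when the two factors each contribute their degree-$d$ piece in $i$, i.e. $t=u=d$ after the symmetrization. I would therefore first argue (degree bookkeeping) that $S_d(n)=\big(\text{coefficient}\big)\cdot c_l^2 n^{2(l-d)}+o(n^{2(l-d)})$, and then pin down the coefficient by isolating the $i^d\cdot i^d$-type contribution: after expanding $(2d-i)^d$ and using $\sum_{i=0}^{2d}(-1)^{i+d}\binom{2d}{i}i^{j}$ which by Lemma \ref{Lemma 2} equals $(-1)^{d}(2d)!\stirlingtwo{j}{2d}$ and hence vanishes for $j<2d$ and equals $(2d)!$ for $j=2d$, only the $i^{2d}$ total-power term remains, delivering the clean factor $(2d)!\binom{l}{d}$. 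The cross terms from the residue-dependence of lower coefficients $c_{l-1}(n),\dots$ all land in $o(n^{2(l-d)})$ and can be absorbed; I would phrase this by noting every coefficient family is bounded, so nondominant families only affect lower-order asymptotics.

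Once the identity $S_d(n)=(2d)!\binom{l}{d}c_l^2n^{2(l-d)}+o(n^{2(l-d)})$ is established, positivity of $S_d(n)$ for all large $n$ is immediate since $(2d)!\binom{l}{d}c_l^2>0$ (recall $c_l>0$ as the leading coefficient of a partition-type growth, or at any rate $c_l\neq 0$ so $c_l^2>0$), which is exactly the order-$j$ Laguerre inequality for $j=d$; and the same computation applied with $d$ replaced by any $j\leqslant d$ (the hypothesis $2d\leqslant l$ guarantees $2j\leqslant l$, so enough coefficients are available) gives the full range $0\leqslant j\leqslant d$. The main obstacle I anticipate is purely organizational: cleanly managing the triple-indexed sum and the substitution $(2d-i)^u=\sum_k\binom{u}{k}(2d)^{u-k}(-i)^k$ so that the vanishing-by-Lemma-\ref{Lemma 2} mechanism is visibly applied, while simultaneously tracking that every discarded term is genuinely $o(n^{2(l-d)})$ rather than merely $O(n^{2(l-d)})$ — this requires checking that the unique way to reach the exponent $2(l-d)$ forces $t=u=d$, so no two distinct monomials of top order can conspire to cancel or reinforce unexpectedly. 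That verification is a finite, deterministic degree computation, so while tedious it presents no real risk.
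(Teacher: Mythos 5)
Your overall route is the same as the paper's: expand both factors in powers of the shift, interchange summation so that the inner sum over $i$ has the form $\sum_{i=0}^{2d}(-1)^{i+d}\binom{2d}{i}i^{v}$, annihilate everything with $v<2d$ by Lemma \ref{Lemma 2}, and evaluate the surviving cross-term sum by Lemma \ref{Lemma 1}; you also land on the correct leading coefficient $(2d)!\binom{l}{d}c_l^2$. (A cosmetic difference: the paper expands the second factor around $n+2d$, so the $j$-dependence is already in pure powers of $j$ and no extra binomial expansion of $(2d-i)^u$ is needed.)

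There is, however, one genuine error in your degree bookkeeping, and it sits precisely in the step you describe as ``a finite, deterministic degree computation'' presenting ``no real risk.'' You claim the exponent $2(l-d)$ is reached only when $t=u=d$, so that ``no two distinct monomials of top order can conspire.'' This is false: every pair $(t,u)$ with $t+u=2d$ contributes at order $n^{2l-t-u}=n^{2(l-d)}$, since $a_t(n)a_u(n)\sim\binom{l}{t}\binom{l}{u}c_l^2n^{2l-t-u}$ and the $i$-sum survives exactly when the total $i$-degree is $2d$ (in your expansion of $(2d-i)^u$ this forces $k=u$, with sign $(-1)^u$). The top-order coefficient is therefore the full alternating sum
\begin{align*}
(-1)^d(2d)!\,c_l^2\sum_{u=0}^{2d}(-1)^{u}\binom{l}{u}\binom{l}{2d-u},
\end{align*}
and it is Lemma \ref{Lemma 1} --- not a collapse to the diagonal --- that evaluates the inner sum to $(-1)^d\binom{l}{d}$ and hence the whole expression to $(2d)!\binom{l}{d}c_l^2$. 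Had only the $t=u=d$ term survived, you would obtain $(2d)!\binom{l}{d}^{2}c_l^2$ instead, so your stated constant is actually inconsistent with your uniqueness claim. The repair is small: replace ``forces $t=u=d$'' by ``forces $t+u=2d$ and $k=u$,'' and then invoke Lemma \ref{Lemma 1} exactly as you already anticipate in your opening paragraph; with that correction the argument coincides with the paper's proof.
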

\begin{proof}
    Let us fix a quasi-polynomial-like function $f(x)$ as in the statement, and expand the left hand side of the inequality (\ref{def: Discrete Laguerre}) with $\omega_n=f(n)$. We have that
    \begin{align*}
        &\sum_{j=0}^{2d}(-1)^{j+d}\binom{2d}{j}f(n+j)f(n+2d-j)\\
        =&\sum_{j=0}^{2d}(-1)^{j+d}\binom{2d}{j}\left[c_{l}(n+j)^{l}+\cdots+c_{l-2d}(n+j)^{l-2d}+o\left((n+j)^{l-2d}\right)\right]\\
        &\times\left[c_{l}(n+2d-j)^{l}+\cdots+c_{l-2d}(n+2d-j)^{l-2d}+o\left((n+2d-j)^{l-2d}\right)\right]\\
        =&\sum_{j=0}^{2d}(-1)^{j+d}\binom{2d}{j}\left[c_{l}\sum_{i=0}^{2d}\binom{l}{i}j^in^{l-i}+\cdots+c_{l-2d}n^{l-2d}+o(n^{l-2d})\right]\\
        &\phantom{\sum_{j=0}^{2d}}\times\left[c_{l}\sum_{i=0}^{2d}(-1)^i\binom{l}{i}j^i(n+2d)^{l-i}+\cdots+c_{l-2d}(n+2d)^{l-2d}+o(n^{l-2d})\right].
    \end{align*}
    Since we are interested in the asymptotic behavior of the above expression, we need to determine the leading coefficient of its polynomial part. It is not difficult to notice that whenever we multiply a summand $\gamma_{i_0,k_0}j^{k_0}n^{l-i_0}$ from the first square bracket with a summand  $\delta_{i_1,k_1}j^{k_1}(n+2d)^{l-i_1}$ from the second one (where the coefficients $\gamma_{i_0,k_0}$ and $\delta_{i_1,k_1}$ are independent of both $j$ and $n$), we can obtain at most $j$ to the power $i_0+i_1\geqslant k_0+k_1$. More precisely, we get an expression of the form
    \begin{align}\label{proof: identity 1}
        \sum_{j=0}^{2d}(-1)^{j+d}\binom{2d}{j}\gamma_{i_0,k_0}\delta_{i_1,k_1}n^{l-i_0}(n+2d)^{l-i_1}j^{k_0+k_1},
    \end{align}
    where $0\leqslant k_0+k_1\leqslant i_0+i_1$. Therefore if $i_0+i_1<2d$, then (\ref{proof: identity 1}) might be rewritten as
    \begin{align*}
    (-1)^d\gamma_{i_0,k_0}\delta_{i_1,k_1}n^{l-i_0}(n+2d)^{l-i_1}\sum_{j=0}^{2d}(-1)^{2d-j}\binom{2d}{j}j^{k_0+k_1}=0,
    \end{align*}
    where the equality follows from Lemma \ref{Lemma 2} with $k=j$, $u=2d$ and $v=k_0+k_1$. Hence, our task boils down to finding the coefficient of $n^{2(l-d)}$. Repeating the above discussion, one can observe that the only possible non-zero term takes the form
    \begin{align*}
        &\sum_{j=0}^{2d}(-1)^{j+d}\binom{2d}{j}c_l^2\sum_{i=0}^{2d}(-1)^i\binom{l}{i}\binom{l}{2d-i}j^{2d}n^{2(l-d)}\\
        =(-1)^dc_l^2\times&\sum_{i=0}^{2d}(-1)^i\binom{l}{i}\binom{l}{2d-i}\times\sum_{j=0}^{2d}(-1)^{2d-j}\binom{2d}{j}j^{2d}.
    \end{align*}
    Now, Lemma \ref{Lemma 1} asserts that 
    \begin{align*}
        \sum_{i=0}^{2d}(-1)^i\binom{l}{i}\binom{l}{2d-i}=(-1)^d\binom{l}{d}.
    \end{align*}
    On the other hand, Lemma \ref{Lemma 2} maintains that
    \begin{align*}
        \sum_{j=0}^{2d}(-1)^{2d-j}\binom{2d}{j}j^{2d}=(2d)!\stirlingtwo{2d}{2d}=(2d)!.
    \end{align*}
    In conclusion, we obtain that
    \begin{align*}
        \sum_{j=0}^{2d}(-1)^{j+d}\binom{2d}{j}f(n+j)f(n+2d-j)=(2d)!\binom{l}{d}c_l^2n^{2(l-d)}+o\left(n^{2(l-d)}\right),
    \end{align*}
    which was to be demonstrated.
\end{proof}

As an immediate consequence of Theorem \ref{theorem: Laguerre quasi}, we get an analogue characterization to that one from Theorem \ref{theorem: higher order Turan A-partition function}.

\begin{thm}\label{theorem: Laguerre A-partition function}
    Let $A$ be a finite multiset (or set) of positive integers with $\#A=k$, and let $1\leqslant 2d<k$ be fixed. Suppose further that $\gcd B=1$ for every $(k-2d)$-multisubset $B\subset A$. Then, for each $1\leqslant j \leqslant d$ the sequence $(p_A(n))_{n=0}^\infty$ satisfies the Laguerre inequality of order $j$ for all but finitely many values of $n$.
\end{thm}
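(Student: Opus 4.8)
The plan is to derive Theorem~\ref{theorem: Laguerre A-partition function} as a direct corollary of Theorem~\ref{theorem: Laguerre quasi} together with Almkvist's expansion (Theorem~\ref{2.4}), in exact parallel to how Theorem~\ref{theorem: higher order Turan A-partition function} followed from Theorem~\ref{theorem: Criterion for quasi-polynomial-like functions}. First I would fix $j$ with $1\leqslant j\leqslant d$ and observe that the hypothesis ``$\gcd B=1$ for every $(k-2d)$-multisubset $B\subset A$'' implies, a fortiori, that $\gcd B=1$ for every $(k-2j)$-multisubset $B$ as well (any $(k-2j)$-multisubset is contained in some $(k-2d)$-multisubset since $2j\leqslant 2d$, and a subset of a multiset with gcd $1$ need not have gcd $1$, so one must be slightly careful here — the correct statement is that we apply Almkvist with the parameter that records how many leading coefficients are constant, and $k-2d\leqslant k-2j$ means the constancy range guaranteed for the $2d$ case already covers what we need for $2j$).

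The key step is then to invoke Theorem~\ref{2.4} with the integer $k-2j$ in place of ``$j$'' in that theorem's notation: since $\gcd B=1$ for every $(k-2j)$-multisubset (which holds because it holds for the smaller $(k-2d)$-multisubsets and, more to the point, Almkvist's hypothesis for the index $k-2d$ is what we are handed), we conclude that $p_A(n)$ agrees, up to an error term of order $O(n^{2j-2})$, with an honest polynomial whose top $2j$ coefficients $b_{k-1},b_{k-2},\ldots,b_{k-2j}$ are constants independent of the residue class of $n\pmod*{\lcm A}$. In other words, $p_A(n)$ can be written in the form
\begin{align*}
    p_A(n)=c_{k-1}n^{k-1}+c_{k-2}n^{k-2}+\cdots+c_{k-1-2j}n^{k-1-2j}+o\!\left(n^{k-1-2j}\right),
\end{align*}
with all displayed coefficients constant and $c_{k-1}=1/\prod_{i=1}^k a_i\neq 0$. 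This is precisely the shape required by Theorem~\ref{theorem: Laguerre quasi} with $l=k-1$ and the role of ``$d$'' there played by our $j$ (note $2j\leqslant 2d<k$, so $2j\leqslant l$).

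Applying Theorem~\ref{theorem: Laguerre quasi} to this expansion yields
\begin{align*}
    \sum_{i=0}^{2j}(-1)^{i+j}\binom{2j}{i}p_A(n+i)p_A(n+2j-i)=(2j)!\binom{k-1}{j}c_{k-1}^2\,n^{2(k-1-j)}+o\!\left(n^{2(k-1-j)}\right),
\end{align*}
and since $\binom{k-1}{j}>0$ and $c_{k-1}^2=\left(\prod_{i=1}^k a_i\right)^{-2}>0$, the right-hand side is strictly positive for all sufficiently large $n$; hence the order $j$ Laguerre inequality holds for all but finitely many $n$. Running this for each $j\in\{1,\ldots,d\}$ completes the proof. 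The only genuinely delicate point — and the one I would state carefully rather than wave at — is the bookkeeping around Almkvist's gcd hypothesis: one must check that the hypothesis phrased for $(k-2d)$-multisubsets is exactly what licenses the $2d$ (hence a fortiori $2j$) constant leading coefficients, matching the convention already used in the proof of Theorem~\ref{theorem: higher order Turan A-partition function}; everything else is a mechanical substitution into the two theorems already established.
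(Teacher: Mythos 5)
Your proposal is correct and follows exactly the paper's own route: the paper proves this theorem in one line by combining Almkvist's expansion (Theorem \ref{2.4}) with Theorem \ref{theorem: Laguerre quasi}, which is precisely what you do, only with the bookkeeping made explicit. Your care over the gcd hypothesis resolves in the right direction (a $(k-2j)$-multisubset with $j\leqslant d$ contains a $(k-2d)$-multisubset of gcd $1$, hence itself has gcd $1$), and in fact a single application of Theorem \ref{theorem: Laguerre quasi} with parameter $d$ already yields all orders $0\leqslant j\leqslant d$ at once.
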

\begin{proof}
    The criterion easily follows from both Theorem \ref{2.4} and Theorem \ref{theorem: Laguerre quasi}.
\end{proof}

Analogously to Section 3, we present a few examples showing that Theorem \ref{theorem: Laguerre quasi}, as well as Theorem \ref{theorem: Laguerre A-partition function}, does not deliver us a necessary condition for the Laguerre inequality of order $d$ for $d\geqslant2$.

\begin{ex}
    Let us assume that $f(n)$ is a quasi-polynomial-like function of the form
    \begin{align*}
        f(n)=n^{10}+n^9+n^8+n^7+\left(n\pmod*{5}\right)\cdot n^6+o(n^6).
    \end{align*}
    It is not difficult to see that the assumption from Theorem \ref{theorem: Laguerre quasi} for $d=2$ does not hold. Nevertheless, one can calculate that
    \begin{align*}
        3 f(n + 2)^2 - 4f(n + 1)f(n + 3) + f(n)f(n + 4)\geqslant 525n^{16}+o(n^{16})
    \end{align*}
    for all sufficiently large values of $n$, and observe that the second Laguerre inequality is asymptotically satisfied for $f(n)$. Thus, Theorem \ref{theorem: Laguerre quasi} is not an optimal criterion.
\end{ex}

As a counterexample for Theorem \ref{theorem: Laguerre A-partition function}, we exhibit the following.

\begin{ex}\label{Example: 1,1,1,1,300(2)}
    We put $A=\{1,\text{$\textcolor{blue}{1},\textcolor{red}{1},\textcolor{brown}{1}$},300\}$ and consider the order $2$ Laguerre inequality. It is clear that the assumptions from Theorem \ref{theorem: Laguerre A-partition function} are not satisfied for $d=2$. Notwithstanding, if we set
    \begin{align*}
        h_{A}(n):=3p_{A}^2(n+2)-4p_{A}(n+1)p_{A}(n+3)+p_{A}(n)p_{A}(n+4),
    \end{align*}
    then it turns out that
    \begin{align*}
        h_A(n)\geqslant\frac{n^4}{2^3\cdot3^3\cdot5^4}+o(n^4)
    \end{align*}
    with the equality whenever $n\not\equiv297\pmod*{300}$, which agrees with Figure 4.
    \begin{center}
\begin{figure}[!htb]
   \begin{minipage}{1\textwidth}
     \centering
     \includegraphics[width=1\linewidth]{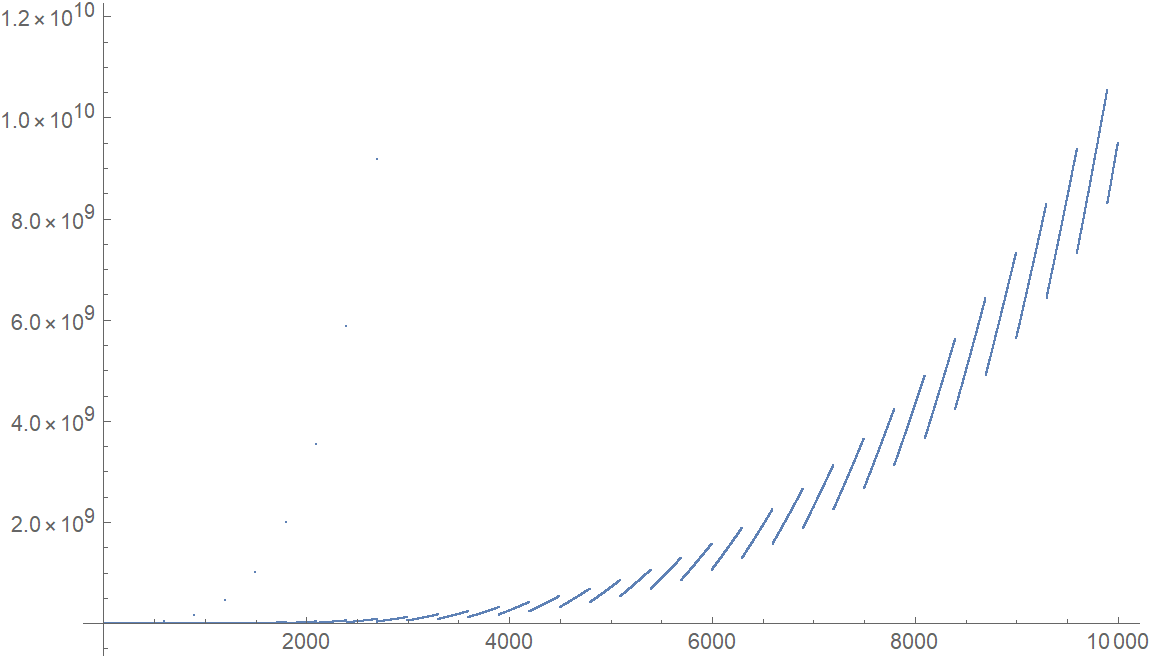}
     \caption{The values of $h_A(n)$ for $A=\{1,\text{$\textcolor{blue}{1},\textcolor{red}{1},\textcolor{brown}{1}$},300\}$ and $1\leqslant n \leqslant10^4$.}
   \end{minipage}\hfill
\end{figure}
\end{center}

In conclusion, we see that Theorem \ref{theorem: Laguerre A-partition function} is not an optimal criterion, as well.
\end{ex}

At the end of this section, we present an example showing that, in general, it might be difficult to derive an optimal criterion for the Laguerre inequality of order $d\geqslant2$ for quasi-polynomial-like functions. 

\begin{ex}\label{Example: 1,2,3,...,8,9}
    Let us consider the $A_m$-partition function defined in Example \ref{Example: 1,2,3,...}. For instance, we may examine the Laguerre inequality of order $2$ for both $p_{A_8}(n)$ and $p_{A_9}(n)$. For the sake of clarity, let us put
    \begin{align*}
        h_{A_m}(n):=3p_{A_m}^2(n+2)-4p_{A_m}(n+1)p_{A_m}(n+3)+p_{A_m}(n)p_{A_m}(n+4).
    \end{align*}
    In other words, $h_{A_m}(n)$ corresponds to the second order Laguerre inequality for $p_{A_m}(n)$. We see that the assumptions from Theorem \ref{theorem: Laguerre A-partition function} do not hold for $p_{A_8}(n)$ and $d=2$. Moreover, one can determine that
    \begin{align*}
        h_{A_8}(n)=-\frac{349n^{10}}{2^{20}\cdot3^{6}\cdot5^4\cdot7^3}+o\left(n^{10}\right),
    \end{align*}
    whenever $n\equiv0,2,\ldots,838\pmod*{840}$. In the case of $h_{A_9}(n)$, on the other hand, we get that the equality
    \begin{align*}
        h_{A_9}(n)=\frac{n^{12}}{2^{24}\cdot3^{11}\cdot5^4\cdot7^3}+o\left(n^{12}\right)
    \end{align*}
    holds for each positive integer $n$, which agrees with Theorem \ref{theorem: Laguerre A-partition function}. Figure 5 and Figure 6 exhibit the plots of $h_{A_8}(n)$ and $h_{A_9}(n)$, respectively.
    \begin{center}
\begin{figure}[!htb]
   \begin{minipage}{0.5\textwidth}
     \centering
     \includegraphics[width=1\linewidth]{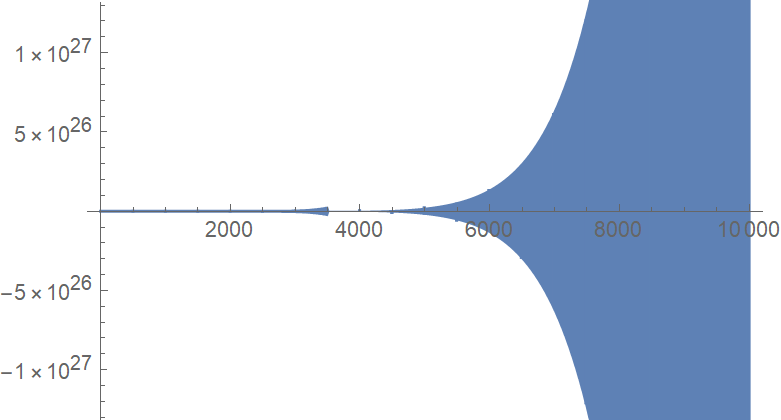}
     \caption{The values of $h_{A_8}(n)$ for $1\leqslant n \leqslant10^4$.}
   \end{minipage}\hfill
   \begin{minipage}{0.5\textwidth}
     \centering
     \includegraphics[width=1\linewidth]{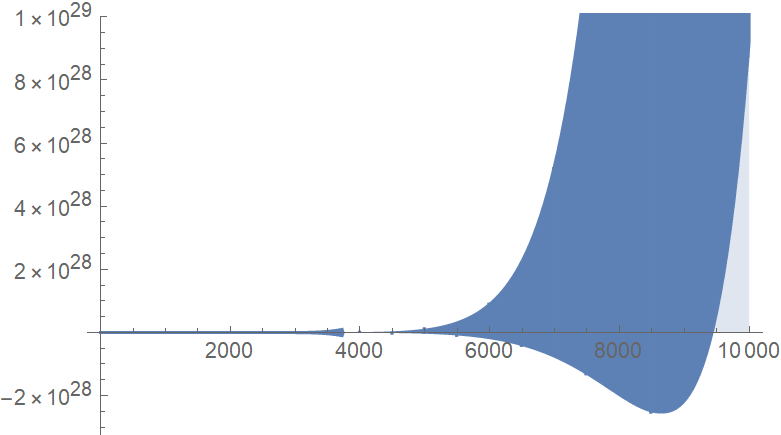}
     \caption{The values of $h_{A_9}(n)$ for $1\leqslant n \leqslant10^4$.}
   \end{minipage}
\end{figure}
\end{center}
The above discussion asserts that it might be difficult to find out an easy description of all quasi-polynomial-like functions which (asymptotically) fulfill the Laguerre inequality of order $d$ for any $d\geqslant2$.
\end{ex}

\section{Concluding remarks}

\iffalse
At the beginning of this section, we focus on two special classes of $A$-partition functions. The first of them is the family of $A_m$-partition functions defined in Example \ref{Example: 1,2,3,...}. The second one, on the other hand, corresponds to the family of multisubsets $B_m:=\{1,2,\text{$\textcolor{blue}{2}$},3,\text{$\textcolor{blue}{3},\textcolor{red}{3}$},4,\text{$\textcolor{blue}{4},\textcolor{red}{4},\textcolor{brown}{4}$},\ldots,$\text{\textcolor{teal}{j}}$\}$ with $\#B_m=m$ for every $m\in\mathbb{N}_+$. 

Now, we have two main goals. First, to determine the values of $A(d)$ and $B(d)$ for some small values of $d$, where 
\begin{align*}
    A(j):=\min{\{k\geqslant2: \text{ }p_{A_k}(n) \text{ satsfies the order }j \text{ Tur\'an inequality for all large } n\}},
\end{align*}
and, analogously,
\begin{align*}
    B(j):=\min{\{k\geqslant2: \text{ }p_{B_k}(n) \text{ satsfies the order }j \text{ Tur\'an inequality for all large } n\}}.
\end{align*}
Second, to calculate the smallest possible numbers $M_{A(d)}(d)$ and $N_{B(d)}(d)$ such that the order $d$ Tur\'an inequality for $p_{A_{A(d)}}(n)$ and $p_{B_{B(d)}}(n)$, respectively, holds for all $n\geqslant M_{A(d)}(d)$ and $n\geqslant N_{B(d)}(d)$, respectively. All of our computations, carried out in Mathematica \cite{WM}, are gathered in Tables 1-4.
\fi

It is quite unfortunate that neither Theorem \ref{theorem: Criterion for quasi-polynomial-like functions} nor Theorem \ref{theorem: higher order Turan A-partition function} delivers necessary conditions for the order $d$ Tur\'an inequality. Analogously, neither Theorem \ref{theorem: Laguerre quasi} nor Theorem \ref{theorem: Laguerre A-partition function} contains necessary conditions for the Laguerre inequality of order $d$. It is worth pointing out that we have such a result in the case of the $r$-log-concavity problem for quasi-polynomial-like functions (and, in particular, $A$-partition functions) \cite{KG3}. Recall that a sequence of real numbers $\omega=\left(w_i\right)_{i=0}^\infty$ is called (asymptotically) $r$-log-concave for $r\in\mathbb{N}_+$, if there exists an integer $N$ such that all terms of the sequences
\begin{align*}
    \widehat{\mathcal{L}}\omega,\widehat{\mathcal{L}}^2\omega,\ldots,\widehat{\mathcal{L}}^r\omega
\end{align*}
are positive for every $i\geqslant N$, where
\begin{align*}
    \widehat{\mathcal{L}}\omega=\left(w_{i+1}^2-w_{i}w_{i+2}\right)_{i=0}^\infty \text{ and } \widehat{\mathcal{L}}^k\omega=\widehat{\mathcal{L}}\left(\widehat{\mathcal{L}}^{k-1}\omega\right)
\end{align*}
for $k\in\{2,3,\ldots,r\}$. We have the following characterization for that issue.

\begin{thm}[Gajdzica]
    Let $l$ and $r$ be arbitrary positive integers such that $l\geqslant2r$. Suppose further that we have 
    \begin{align*}
        f(n)=a_l(n)n^l+a_{l-1}(n)n^{l-1}+\cdots+a_{l-2r}(n)n^{l-2r}+o\left(n^{l-2r}\right),
    \end{align*} 
    where the coefficients $a_{l-2r}(n),\ldots,a_l(n)$ might depend on the residue class of\linebreak $n\pmod*{M}$ for some positive integer $M\geqslant2$. Then the sequence $\left(f(n)\right)_{n=0}^\infty$ is asymptotically $r$-log-concave if and only if all the numbers $a_{l-2r}(n),\ldots,a_l(n)$ are independent of the residue class of $n\pmod*{M}$.
\end{thm}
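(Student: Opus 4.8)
The whole argument will rest on understanding one application of the operator $\widehat{\mathcal{L}}$ to a quasi-polynomial-like function. First I would record the following computation, which is a one-step analogue of the expansions already carried out in Sections~3 and~4: if $h(n)=q_L(n)n^L+q_{L-1}(n)n^{L-1}+\cdots$ is quasi-polynomial-like of degree $L$, then $\widehat{\mathcal{L}}h(n)=h(n+1)^2-h(n)h(n+2)$ is again quasi-polynomial-like and its leading term is governed by only the top two of the coefficients $q_i$: if $q_L$ is residue-dependent then $\deg\widehat{\mathcal{L}}h=2L$ with leading coefficient $q_L(n+1)^2-q_L(n)q_L(n+2)$; if $q_L$ is a constant but $q_{L-1}$ is residue-dependent then $\deg\widehat{\mathcal{L}}h=2L-1$ with leading coefficient $q_L\bigl(2q_{L-1}(n+1)-q_{L-1}(n)-q_{L-1}(n+2)\bigr)$; and if $q_L$ and $q_{L-1}$ are both constant then $\deg\widehat{\mathcal{L}}h=2L-2$ with leading coefficient $Lq_L^2$, and in this last case the number of residue-independent leading coefficients drops by exactly $2$ in passing from $h$ to $\widehat{\mathcal{L}}h$. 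I would also isolate two elementary facts about $M$-periodic sequences: a positive, non-constant one is never log-concave (a $\mathbb{Z}$-periodic function with non-increasing consecutive differences must be constant), and the ``second difference'' $2u(n+1)-u(n)-u(n+2)$ of any $M$-periodic $u$ sums to $0$ over a period, hence is strictly negative on some residue class as soon as $u$ is non-constant.

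For the ``if'' direction, suppose $a_l,\dots,a_{l-2r}$ are all residue-independent, so $f(n)=Q(n)+o(n^{l-2r})$ for a genuine polynomial $Q$ of degree $l$. Iterating the third case of the lemma, for each $1\leqslant k\leqslant r$ the function $\widehat{\mathcal{L}}^kQ$ is a polynomial of degree $2^k(l-2)+2$ whose leading coefficient is an explicit positive constant; a short exponent count (using $l\geqslant 2r$) shows that the $o(\cdot)$-errors accumulated after $k$ applications of $\widehat{\mathcal{L}}$ are of order $o\bigl(n^{2^k(l-2)+2}\bigr)$, hence cannot spoil this positive leading term. Therefore $\widehat{\mathcal{L}}^kf(n)>0$ for all large $n$ and every $1\leqslant k\leqslant r$, i.e. $\bigl(f(n)\bigr)_{n=0}^\infty$ is asymptotically $r$-log-concave.

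For the ``only if'' direction I would argue contrapositively. Assume some coefficient among $a_l,\dots,a_{l-2r}$ is residue-dependent, and let $a_{l-j}$, $0\leqslant j\leqslant 2r$, be the one of largest index with this property, so $a_l,\dots,a_{l-j+1}$ are constants. Feeding this into the lemma and tracking the first residue-dependent coefficient: after about $\lceil j/2\rceil$ applications of $\widehat{\mathcal{L}}$ the residue-dependence surfaces in the leading slot of some $\widehat{\mathcal{L}}^m f$ — and then either that leading coefficient is already negative on some residue class (so $f$ is not $m$-log-concave), or, being a non-constant periodic sequence, one further application together with the first elementary fact makes the leading coefficient of $\widehat{\mathcal{L}}^{m+1}f$ negative on some residue class — or it surfaces in the second slot of some $\widehat{\mathcal{L}}^m f$, in which case the second case of the lemma together with the second elementary fact makes $\widehat{\mathcal{L}}^{m+1}f$ negative for infinitely many $n$. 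In all cases some $\widehat{\mathcal{L}}^k f$ with $k\leqslant r$ fails to be eventually positive, so $f$ is not asymptotically $r$-log-concave.

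The main obstacle is the bookkeeping in the ``only if'' direction: one has to pin down, in terms of the parity of $j$ and of the chain $a_l,\dots,a_{l-j+1}$ of constants, precisely at which step $m$ the residue-dependence reaches the leading (or second) coefficient, verify that the constant leading coefficient produced along the way is nonzero so that the relevant case of the lemma applies with a definite sign, and confirm that $m$ (or $m+1$) does not exceed $r$. One must also treat the degenerate situations in which the surfaced leading coefficient of $\widehat{\mathcal{L}}^m f$ is non-negative but vanishes on some residue class, where the first elementary fact has to be replaced by a more careful local analysis. A minor but genuinely needed technical ingredient in both directions is the exponent accounting for the $o(\cdot)$-remainders through the iterated operator, which is exactly where the hypothesis $l\geqslant 2r$ enters.
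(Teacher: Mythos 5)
The paper itself contains no proof of this theorem: it is quoted in the concluding section as a result imported from \cite{KG3}, so there is no in-house argument to compare yours against and the proposal must be judged on its own. Your overall strategy --- tracking how residue-dependence of the coefficients propagates into the leading coefficients of the iterates $\widehat{\mathcal{L}}^k f$ --- is the natural one, your two ``elementary facts'' about $M$-periodic sequences are correct, and the first two cases of your key lemma check out. The ``if'' direction also goes through essentially as you describe: when $a_l,\dots,a_{l-2r}$ are constants, $\widehat{\mathcal{L}}^k f$ has its top $2r+1-2k$ coefficients constant and a positive constant leading coefficient in degree $2^k(l-2)+2$, while the accumulated error after $k$ steps is $o\bigl(n^{2^k(l-2)+2-(2r-2k)}\bigr)$, which at $k=r$ just barely preserves the leading term; this is exactly where $l\geqslant 2r$ and the length of the coefficient list enter.

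The gap is in the third case of your lemma, and it is fatal to the ``only if'' direction. When $q_L$ and $q_{L-1}$ are constant, the coefficient of $n^{2L-2}$ in $\widehat{\mathcal{L}}h$ is not $Lq_L^2$ but
\[
Lq_L^2+q_L\bigl(2q_{L-2}(n+1)-q_{L-2}(n)-q_{L-2}(n+2)\bigr),
\]
so the residue-dependence of $q_{L-2}$ does not ``surface'' at a later iterate: it lands immediately in the leading slot, superimposed on the positive constant $Lq_L^2$. Your second elementary fact only guarantees that the second-difference term is negative on some residue class; it gives no lower bound on its magnitude, so the displayed coefficient can remain positive for every residue class and no iterate ever goes negative. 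This is not a degenerate corner to be handled by ``more careful local analysis'' --- it is the generic behaviour for small perturbations. Concretely, for $l=2$ and $r=1$ take $f(n)=n^2+\epsilon(n)$ with $\epsilon$ $2$-periodic taking the values $1/4$ and $1/2$; then $a_0$ depends on $n\bmod 2$, yet $\widehat{\mathcal{L}}f(n)=\bigl(2\pm\tfrac12\bigr)n^2+O(n)>0$ for all large $n$, so $f$ is asymptotically $1$-log-concave. Hence the contrapositive cannot be pushed through at even depth, and this example in fact contradicts the ``only if'' implication of the statement as it is literally transcribed here; before investing more effort in the bookkeeping you should consult the precise formulation and proof in \cite{KG3} to see exactly which coefficients are required to be residue-independent.
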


Unfortunately, the analogous descriptions are impossible for both the higher order Tur\'an inequalities and the Laguerre inequalities. For the former, if we assume that
\begin{align*}
        f(n)=c_{l}n^{l}+c_{l-1}n^{l-1}+\cdots+c_{l-d+1}n^{l-d+1}+o(n^{l-d+1})
    \end{align*}
    for some $1\leqslant d\leqslant l$. Then, the leading coefficient of the (quasi-polynomial-like) function which corresponds to the $d$th Tur\'an inequality may heavily depend on the residue class of $n$ modulo some positive integer, see Example \ref{Example: 1,1,1,1,300}. To visualize the issue more accurately, let us consider the third order Tur\'an inequality for  
    \begin{align*}
        f(n)=c_{l}n^{l}+c_{l-1}n^{l-1}+c_{l-2}n^{l-2}+c_{l-3}(n)n^{l-3}+o(n^{l-3}),
    \end{align*}
where $l\geqslant3$ and $c_{l-3}(n)$ depends on the residue class of $n\pmod*{M}$ for some $M\geqslant2$. It is tiresome but elementary to show that we have the following:
\footnotesize{
\begin{align*}
    &4\left(f(n)^2-f(n-1)f(n+1)\right)\left(f(n+1)^2-f(n)f(n+2)\right)-\left(f(n)f(n+1)-f(n-1)f(n+2)\right)^2\\
    &=\big{[}-6  c_{l-3}(n-1) c_{l-3}(n+1)+ 2 c_{l-3}(n-1) c_{l-3}(n+2)+4 l c_lc_{l-3}(n-1)- c_{l-3}^2(n-1)\\
    &\phantom{=\big{[}}+6 c_{l-3}(n-1)c_{l-3}(n) +6 c_{l-3}(n+1) c_{l-3}(n+2)+12 l c_l c_{l-3}(n+1)-9 c_{l-3}^2(n+1)\\
    &\phantom{=\big{[}}+18 c_{l-3}(n) c_{l-3}(n+1)-4 l c_l c_{l-3}(n+2)- c_{l-3}^2(n+2)-6 c_{l-3}(n) c_{l-3}(n+2)\\
    &\phantom{=\big{[}}+4 l^3 c_l^2-4 l^2 c_l^2-12 lc_l c_{l-3}(n) -9 c_{l-3}^2(n)\big{]}c_l^2n^{4l-6}+o(n^{4l-6}),
\end{align*}}\normalsize Thus, it is easy to see that the leading coefficient of that expression intensely depends on the residue class of $n\pmod*{M}$, which coincides with our discussion above. 

One can demonstrate a parallel reasoning to deduce that the same problem plagues us if we deal with the Laguerre inequality of order $d$ for $d\geqslant2$, which Example \ref{Example: 1,1,1,1,300(2)} indicates.  

At the end of the manuscript, we state a few open problems. The first of them encourages us to deal with the higher order Tur\'an inequalities for some particular $A$-partition functions.   

\begin{pro}\label{Problem 1}
    Fix a set (or multiset) $A$ of positive integers and investigate the higher order Tur\'an inequalities for the $A$-partition function.
\end{pro}

\begin{re}{\rm
    For instance, if we set $A=A_m=\{1,2,\ldots,m\}$ in Problem \ref{Problem 1}, then it is known \cite{KG2} that the second order Tur\'an inequality for $p_{A_m}(n)$ begins to hold for $m=5$. In that case we have that 
    \begin{align*}
        p_{A_5}^2(n)\geqslant p_{A_5}(n-1)p_{A_5}(n+1)
    \end{align*}
    for all $n>37$.

    One can extend the above and deal with the problem for the $A_m^{(l)}$-partition function, where $A_m^{(l)}=\{1^l,2^l,\ldots,m^l\}$ and $m\in \mathbb{N}_+\cup\{\infty\}$. The case of $m=\infty$ and $l=1$ has been investigated by Griffin et al. \cite{Griffin} and Larson and Wagner \cite{Larson}. For more information about the general setting (when $m=\infty$), we refer the reader to Ulas' paper \cite{MU2}. 
}
\end{re}

We also hope that there is a chance to discover a more efficient criterion than Theorem \ref{theorem: Criterion for quasi-polynomial-like functions}, and state the following.

\begin{pro}\label{Problem 2}
    Let $d\geqslant3$ be arbitrary. Find a more effective criterion than Theorem \ref{theorem: Criterion for quasi-polynomial-like functions} (or Theorem \ref{theorem: higher order Turan A-partition function}) for the order $d$ Tur\'an inequality for quasi-polynomial-like functions. Alternatively, do that for small values of the parameter $d$.
\end{pro}

Finally, we formulate the analogues of both Problem \ref{Problem 1} and Problem \ref{Problem 2} in the context of Laguerre inequalities for quasi-polynomial-like functions.

\begin{pro}
    Fix a set (or multiset) $A$ of positive integers and investigate the Laguerre inequalities for the $A$-partition function.
\end{pro}

\begin{pro}
    Let $d\geqslant2$ be arbitrary. Derive a more efficient criterion than Theorem \ref{theorem: Laguerre quasi} (or Theorem \ref{theorem: Laguerre A-partition function}) for the $d$th Laguerre inequality for quasi-polynomial-like functions. Alternatively, do that for small values of the parameter $d$.
\end{pro}

\section*{Acknowledgements}
I wish to express my sincere thanks to Piotr Miska and Maciej Ulas for their time and helpful suggestions. I am also grateful to Ian Wagner for his additional comments. This research was funded by both a grant of the National Science Centre (NCN), Poland, no. UMO-2019/34/E/ST1/00094 and a grant from the Faculty of Mathematics and Computer Science under the Strategic Program Excellence Initiative at the Jagiellonian University in Kraków.
%I am also grateful to Christophe Vignat for his helpful suggestion. 


\begin{thebibliography}{99}
\bibitem{AK} A. K. Agarwal, Padmavathamma, M. V. Subbarao, {\it Partition Theory}, Atma Ram and Sons, Chandigarh, 2005.
\bibitem{GA} G. Almkvist, {\it Partitions with parts in a finite set and with parts outside a finite set}, Experiment. Math. 11 (2002), 449–456.
%\bibitem{Asai} N. Asai, I. Kubo, H-H. Kuo, {\it Roles of log-concavity, log-convexity and growth order in white noise analysis}, Infinite Dimens. Anal. Quantum Probab. Relat.Topics 4 (2001), 59–84.
%\bibitem{Brenti} F. Brenti, {\it Log-concave and unimodal sequences in algebra, combinatorics, and geometry: an update}, Contemp. Math. 178 (1994), 71–89.
\bibitem{GA3} G. E. Andrews, {\it The hard-hexagon model and Rogers-Ramanujan type identities}, Proc. Nat. Acad. Sci. 78 (1981), 5290-5292.
\bibitem{GA2} G. E. Andrews, {\it The Theory of Partitions, the Encyclopedia of Mathematics and Its Applications Series}, Addison-Wesley, New York (1976), reissued, Cambridge University Press, New York (1998).
\bibitem{GA1} G. E. Andrews, K. Eriksson, {\it Integer Partitions}, Cambridge University Press, Cambridge, 2004.
\bibitem{GA4} G. E. Andrews, P. Paule, {\it MacMahon’s partition analysis XI: Broken diamonds and modular forms}, Acta Arith. 126 (2007), 281–294.
\bibitem{BA} R. J. Baxter, {\it Hard hexagons: exact solution}, J. Phys. A 13 (1980), 161–170.
\bibitem{Baker} A. Baker, J. Males, {\it Asymptotics, Turán Inequalities, and the Distribution of the BG-Rank and 2-Quotient Rank of Partitions}, Ann. Comb. (2022).
\bibitem{Beck} M. Beck, I. M. Gessel, T. Komatsu, {\it The polynomial part of a restricted partition function related to the Frobenius problem}, Electronic J. Comb. 8 (2001), $\#$N7, 1–5.
\bibitem{Bell} E. T. Bell, {\it Interpolated denumerants and Lambert series}, Amer. J. Math. 65 (1943), 382–386.
\bibitem{Berkovich} A. Berkovich, F. Garvan {\it On the Andrews-Stanley refinement of Ramanujan’s partition congruence modulo $5$ and generalizations}, Trans. Amer. Math. Soc., Vol. 358, no. 2 (2006), 703–726.

%\bibitem{Cardon} D. A. Cardon, A. Rich, {\it Tur\'an inequalities and subtraction-free expressions}, JIPAM. J. Inequal. Pure Appl. Math., 9 (4) (2008), Article 91, 11 pp.
%\bibitem{BKRT} K. Bringmann, B. Kane, L. Rolen, Z. Tripp, {\it Fractional partitions and conjectures of Chern-Fu-Tang and Heim-Neuhauser}, Trans. Am. Math. Soc. Ser. B 8 No. 21 (2021), 615-634.
\bibitem{Chan} H. H. Chan, L. Wang,{\it Fractional powers of the generating function for the partition function}, Acta Arithmetica 187 (2019), 59–80.
%\bibitem{Chen} W. Y. C. Chen, {\it Recent developments on log-concavity and $q$-log-concavity of combinatorial polynomials}, FPSAC 2010 Conference Talk Slides, \url{http://www.billchen.org/talks/2010-FPSAC} (2010).
\bibitem{Chen1} W. Y. C. Chen, {\it The spt-function of Andrews}, London Mathematical Society Lecture Note Series, pp. 141–203. Cambridge University Press (2017).

\bibitem{Chen2} W. Y. C. Chen, D. X. Q. Jia, L. X. W. Wang, {\it Higher order Tur\'an inequalities for the partition function}, Trans. Amer. Math. Soc. 372 (2019), 2143–2165.


\bibitem{CN} M. Cimpoeaş, F. Nicolae, {\it On the restricted partition function}, Ramanujan J 47 (2018), 565–588.
%\bibitem{Corteel}  S. Corteel, J. Lovejoy, {\it Overpartitions}, Trans. Amer. Math. Soc. 356 (2004), 1623–1635.
\bibitem{Craig} W. Craig, A. Pun, {\it A note on the higher order Tur\'an inequalities for $k$-regular partitions}, Res. Number Theory 7, 5 (2021).
\bibitem{Craven} T. Craven, G. Csordas, {\it Jensen polynomials and the Tur\'an and Laguerre inequalities}, Pacific J. Math. Vol. 136, no. 2 (1989), 241–260.
\bibitem{Csordas} G. Csordas, T. S. Norfolk, R. S. Varga, {\it The Riemann hypothesis and the Tur\'an inequalities}, Trans. Amer. Math. Soc. Vol. 296, no. 2 (1986), 521–541.
\bibitem{Csordas1} G. Csordas, R. S. Varga, {\it Necessary and sufficient conditions and the Riemann hypothesis}, Adv. in Appl. Math. Vol. 11, no. 3 (1990) 328–357.
\bibitem{DSP} S. DeSalvo, I. Pak, {\it Log-concavity of the partition function}, Ramanujan J. 38 (2015), 61-73.
\bibitem{DV} K. Dilcher, C. Vignat, {\it An explicit form of the polynomial part of a restricted partition function}, Res. Number Theory 3, 1 (2017).
\bibitem{Dimitrov} D. K. Dimitrov, {\it Higher order Tur\'an inequalities}, Proc. Am. Math. Soc. Vol. 126, no. 7 (1998), 2033–2037.

\bibitem{Dong} J. J. W. Dong, K. Q. Ji, D. X. Q. Jia, {\it Tur\'an inequalities for the broken k-diamond partition function}, Ramanujan J (2023). 
%\bibitem{Doslic} T. Do\v slić, {\it Log-balanced combinatorial sequences}, Intl. J. Math. Math. Sci. 4 (2005), 507–522.
%\bibitem{Doslic2} T. Do\v slić, {\it Seven (lattice) paths to log-convexity}, Acta Appl. Math. 110 (2010), 1373–1392.
%\bibitem{Doslic3} T. Do\v slić, D. Svrtan, D. Veljan, {\it Enumerative aspects of secondary structures},  Discret. Math. 285 (2004), 67–82.
\bibitem{Ewens} W. J. Ewens, {\it The sampling theory of selectively neutral alleles}, Theoretical Population Biology, Vol. 3, no. 1 (1972), 87-112.
\bibitem{KG1} K. Gajdzica, {\it A note on the restricted partition function $p_\mathcal{A}(n,k)$}, Discrete Math. 345 (2022), 112943.
\bibitem{KG2} K. Gajdzica, {\it Log-concavity of the restricted partition function $p_\mathcal{A}(n,k)$ and the new Bessenrodt-Ono type inequality}, J. Number Theory 251 (2023), 31-65.
\bibitem{KG3} K. Gajdzica, {\it Restricted partition functions and the $r$-log-concavity of quasi-polynomial-like functions}, \url{https://arxiv.org/abs/2305.00085} (2023).
%\url{https://arxiv.org/abs/2204.07961}.

\bibitem{Griffin} M. Griffin, K. Ono, L. Rolen, D. Zagier, {\it Jensen polynomials for the Riemann zeta function and other sequences}, Proc. Natl. Acad. Sci. USA 116 (2019), 11103–11110.
\bibitem{GKP} R. L. Graham, D. E. Knuth, O. Patashnik, {\it Concrete Mathematics}, Addison-Wesley Publ. Co., New York, 1994.
\bibitem{BNT2} B. Heim, M. Neuhauser, R. Tr\"{o}ger, {\it Inequalities for Plane Partitions}, Ann. Comb. 27 (2023), 87–108.
%\bibitem{HN1} B. Heim, M. Neuhauser, {\it Polynomization of the Chern-Fu-Tang conjecture}, Res. Number Theory 7, 26 (2021).
\bibitem{H} M. D. Hirschhorn, {\it The power of $q$: A Personal Journey}, Developments in Mathematics, Vol. 49, Springer, 2017.
%\bibitem{Hou} Q.H. Hou, Z.R. Zhang, {\it Asymptotic $r$-log-convexity and Precursive sequences}, J. Symbolic Comput. 93 (2019), 21-33.

%\bibitem{Hou2} Q.H. Hou, Z.R. Zhang, {\it $r$-log-concavity of partition functions}, Ramanujan J 48 (2019), 117–129.
\bibitem{Israilov} M. I. Israilov, {\it Numbers of solutions of linear diophantine equations and their applications in the theory of invariant cubature formulas}, Sibirsk. Math. Zh. 22(2) (1981) 121–136, 237. English translation: Sibirian Math. J. 22(2) (1981), 260–273.
\bibitem{Iskander} J. Iskander, V. Jain, V. Talvola, {\it Exact formulae for the fractional partition functions} Res. Number Theory 6, 20 (2020).
\bibitem{Kingman} J. F. C. Kingman, {\it Random Partitions in Population Genetics}, Proceedings of the Royal Society of London. Series A, Mathematical and Physical Sciences, Vol. 361, no. 1704 (1978), 1–20.


\bibitem{Larson} H. Larson, I. Wagner, {\it Hyperbolicity of the partition Jensen polynomials}, Res. Number Theory 5 (2019), 19.
\bibitem{Levin} B. Ja. Levin, {\it Distribution of Zeros of Entire Functions}, revised ed., Translations of Mathematical Monographs, Vol. 5, Amer. Math. Soc., Providence, R. I., 1980.
%\bibitem{Liu} E. Y. S. Liu, H. W. J. Zhang, {\it Inequalities for the overpartition function}, Ramanujan J 54 (2021), 485–509.

%\bibitem{Milenkovic}  O. Milenkovic, K. J. Compton {\it Probabilistic transforms for combinatorial urn models}, Comb. Probab. Comput. 13 (2004), 645–675.
%\bibitem{Mu} G. Mukherjee, H. W.J. Zhang, Y. Zhong, {\it Higher order log-concavity of the overpartition function and its consequences}, \url{https://arxiv.org/abs/2204.07961} (2022).
\bibitem{MBN} M. B. Nathanson, {\it Partitions with parts in a finite set}, Proc. Amer. Math. Soc. 128 (2000), 1269–1273.
%\bibitem{Netto} E. Netto, {\it Lehrbuch der Combinatorik}, (Teubner, Leipzig, 1927).
\bibitem{N} J.-L. Nicolas, {\it Sur les entiers N pour lesquels il y a beaucoup des groupes ab\'{e}liens d'ordre N}, Ann. Inst. Fourier 28 No. 4 (1978), 1-16.
\bibitem{Hermit} N. Obrechkoff, {\it Zeros of polynomials}, Bulgian Academy of Science (Sofia), 1963. (in Bulgarian). English translation (by I. Dimovski and P. Rusev) published by The Marin Drinov Academic Publishing House, Sofia (2003).
\bibitem{Patrick} M.L. Patrick, {\it Extensions of inequalities of the Laguerre and Tur\'an type}, Pacific J. Math., Vol. 44 (2) (1973), 675-682.
\bibitem{Ono} K. Ono, S. Pujahari, L. Rolen, {\it Tur\'an inequalities for the plane partition function}, Advances in Mathematics, Vol. 409, Part B (2022), 108692.
\bibitem{Pandey} B. V. Pandey, {\it Higher Tur\'an inequalities for the plane partition function}, (2022), \url{https://arxiv.org/abs/2210.08617}.
%\bibitem{PS} G. P\'{o}lya, G. Szeg\"{o}, {\it Aufgaben und Lehrs¨atze aus der Analysis}, (Springer, Berlin, 1925), English translation: {\it Problems and Theorems in Analysis}, (Springer, New York, 1972).
%\bibitem{Prelec} D. Prelec, {\it Decreasing impatience: a criterion for non-stationary time preference and “hyperbolic” discounting}, Scand. J. Econ. 106 (2004), 511–532.
%\bibitem{HR} H. Rademacher, {\it A convergent series for the partition function $p(n)$}, Proc. Natl. Acad. Sci. USA 23 (1937), 78-84.
%\bibitem{RS1} \O. J. R{\o}dseth, {\it Some arithmetical properties of $m-$ary partitions}, Proc. Camb. Philos. Soc. 68 (1970), 447–453.
%\bibitem{RS2} \O. J. R{\o}dseth and J. A. Sellers, {\it On $m$-ary partition function congruences: A fresh look at\linebreak a past problem}, J. Number Theory 87 (2001), 270–281.

\bibitem{Rahman} Q. I. Rahman, G. Schmeisser, {\it Analytic Theroy of Polynomials}, Oxford Univ. Press, Oxford, 2002.

\bibitem{RS} \O. J. R{\o}dseth, J. A. Sellers, {\it Partitions With Parts In A Finite Set}, Int. J. Number Theory 2 no. 3 (2006), 455–468.


\bibitem{Sills} A. V. Sills, {\it An invitation to the Rogers-Ramanujan identities}, With a foreword by George E. Andrews. CRC Press, Boca Raton, FL, 2018.
\bibitem{Stanley} R. P. Stanley, {\it Enumerative Combinatorics}, Vol. 1, Wadsworth \& Brooks/Cole, Monterey, California, 1986.
%\bibitem{Stanley2} R. P. Stanley, {\it Log-concave and unimodal sequences in algebra, combinatorics, and geometry}, Ann. N. Y. Acad. Sci. 576 (1989), 500–535.
%\bibitem{Sun} Z.-W. Sun, {\it On a sequence involving sums of primes}, Bull. Aust. Math. Soc. 88 (2013), 197–205.

\bibitem{Szego} G. Szeg\H{o}, {\it On an inequality of P. Tur\'an concerning Legendre polynomials}, Bull. Am. Math. Soc. 54 (1948), 401–405.
\bibitem{Szego2} G. Szeg\H{o}, {\it Orthogonal Polynomials}, American Mathematical Society, Providence, 107 (1975).

\bibitem{MU} S. Tengely, M. Ulas, {\it Equal values of certain partition functions via Diophantine equations}, Res. Number Theory 7, 67 (2021).
\bibitem{MU2} M. Ulas, {\it Some observations and speculations on partitions into $d-$th powers}, Bull. Aust. Math. Soc. 104(3) (2021), 406–414.
\bibitem{Wagner2} I. Wagner, {\it On a new class of Laguerre-P\'olya type functions with applications in number theory}, Pacific J. Math. 320(1) (2022), 177–192.
\bibitem{Wang-Yang} L.X.W. Wang, E.Y.Y. Yang, {\it Laguerre inequalities for discrete sequences}, Adv. Appl. Math. 139 (2022), 102357.
\bibitem{Wang-Yang2} L.X.W. Wang, E.Y.Y. Yang, {\it Laguerre inequality and determinantal inequality for the distinct partition function}, submitted.
\bibitem{WM} Wolfram Research{,} Inc., {\it Mathematica, {V}ersion 11.3}, Champaign, IL, 2018.
\bibitem{Yang3} E.Y.Y. Yang, {\it Laguerre inequality and determinantal inequality for the broken $k$-diamond partition function}, \url{https://arxiv.org/abs/2305.17864} (2023).
%\bibitem{Zhao} F-Z. Zhao, {\it The log-balancedness of generalized derangement numbers}, Ramanujan J 58 (2022), 121–129.






%\bibitem{MU} M. Ulas, {\it Some observations and speculations on partitions into $d-$th powers}, Bull. Aust. Math. Soc., to appear.
%\bibitem{WM} Wolfram Research{,} Inc., {\it Mathematica, {V}ersion 11.3}, Champaign, IL, 2018.
%\bibitem{Z} B. Żmija, {\it Recurrence sequences connected with the $m$-ary partition function and their divisibility properties}, J. Number Theory 211, 322-370 (2020).

\end{thebibliography}
\end{document}